\newcommand{\tdens}{\tau}
\newcommand{\edens}{\varepsilon}
\newcommand{\calG}{\mathcal{G}}
\newcommand{\calO}{\mathcal{O}}
\newcommand{\be}{\begin{equation}}
\newcommand{\ee}{\end{equation}}
\newtheorem{lemma}{Lemma}
\newtheorem{theorem}[lemma]{Theorem}
\newtheorem{corollary}[lemma]{Corollary}
\newtheorem{proposition}[lemma]{Proposition}
\title{Existence of a symmetric bipodal phase in the edge-triangle model}
\date{\today}
\author{
Joe Neeman
\and Charles Radin
\and Lorenzo Sadun
}
\address{Joe Neeman\\Department of Mathematics\\The University of
  Texas at Austin\\ Austin, TX 78712} \email{joeneeman@gmail.com}
\address{Charles Radin\\Department of Mathematics\\The University of
  Texas at Austin\\ Austin, TX 78712} \email{radin@math.utexas.edu}
\address{Lorenzo Sadun\\Department of Mathematics\\The University of
  Texas at Austin\\ Austin, TX 78712} \email{sadun@math.utexas.edu}
\thanks{
This work was partially supported by the Deutsche Forschungsgemeinschaft (DFG, German Research
Foundation) under Germany's Excellence Strategy – EXC-2047/1 – 390685813,
and by a fellowship from the Alfred P. Sloan Foundation. This material is based upon work supported by the National Science Foundation under Grant Nos. 2145800 and 2204449. Any opinions, findings, and conclusions or recommendations expressed in this material are those of the author(s) and do not necessarily reflect the views of the National Science Foundation.
}
\begin{document}

\begin{abstract}
In the edge-triangle model with edge density close to 1/2 and triangle density below 1/8 we prove that the unique 
entropy-maximizing graphon is symmetric bipodal. We also prove that,
for any edge density $e$ less than $e_0 = (3-\sqrt{3})/6 \approx 0.2113$ and 
triangle density slightly less than $e^3$, the entropy-maximizing graphon is not symmetric bipodal. 
 \end{abstract}

\maketitle

\section{Introduction and Results}\label{sec:Intro}
%\vskip.2truein \noindent
\subsection{Results}
% \vskip.05truein
We study emergent {\em smoothness with respect to change of competing
  constraints} in asymptotically large dense random graphs. More
specifically, we determine and study smooth phases separated by sharp
transitions. We derive a new
phase in the model with sharp constraints on edge and
triangle densities. The phase is ``symmetric bipodal'' and we show how to use
its symmetry to distinguish the phase {\it intrinsically} from other
phases. Unlike in previous work, graphs in this phase are not small perturbations
of Erd\H{o}s-R\'enyi graphs; this requires new techniques, which we develop. 

Let $g(x,y)$ be a graphon, a measurable symmetric function
$g: [0,1]^2 \to [0,1]$. Let 
\begin{equation}\label{3-quantities}
\edens(g) = \iint g(x,y) \, dx \, dy, \qquad 
\tdens(g) = \iiint g(x,y)g(y,z)g(z,x) \, dx \, dy \, dz, 
\end{equation}
and 
\begin{equation}
S(g)=\iint H[g(x,y)]\,dx\, dy, \hbox{ where }
H(u) = -[u \ln(u) + (1-u) \ln(1-u)].
\end{equation}
The integrals $\edens(g)$ and $\tdens(g)$ represent the overall edge and triangle
densities of graphs whose adjacency matrices are close to the graphon
in the ``cut metric'' \cite{Lov},
while $S(g)$ is proportional to the entropy of a random process for generating such graphs. 
We refer to $S(g)$ as the ``entropy of the graphon $g$,'' distinct but
related to the Boltzmann entropy defined in the next paragraph. The quantities
$\edens(g)$, $\tdens(g)$ and $S(g)$ are all invariant under measure-preserving
transformations of $[0,1]$. All statements about uniqueness of graphons should 
be understood to mean ``unique up to measure-preserving transformations 
of $[0,1]$.''

We consider constrained systems, where the edge and triangle densities are constrained
to a vanishingly small tolerance as follows. For each achievable
ordered pair $(e,t)$, we
define the Boltzmann entropy in terms of the partition
function $Z^{n,\delta}_{e,t}$, which is the cardinality of the
set of graphs on $n$ nodes with edge density in the interval
$(e-\delta,e+\delta)$ and triangle density in the interval
$(t-\delta,t+\delta)$. From the partition function  we define the Boltzmann entropy \cite{RS1}
as
\begin{equation}\label{Boltzmann}
B(e,t)=\lim_{\delta \searrow 0}\lim_{n\to
  \infty}\frac{1}{n^2}\ln\big[ Z^{n,\delta}_{e,t}\big].
\end{equation}
Using \cite{CV} we established \cite{RS1} the variational
formula
\begin{equation}\label{variation}
  B(e,t)=\sup_{g\in \calG_{e,t}}S(g),
\end{equation}
where
\begin{equation}\label{constrained}
  \calG_{e,t} = \{ g\, |\, \edens(g)=e, \tdens(g)=t\}.
\end{equation}
%\begin{equation}\label{Boltzmann}
%B(e,t)=\sup_{g \in \calG_{e,t}}S(g), \qquad 
%\calG_{e,t} = \{ g | \edens(g)=e, \tdens(g)=t\}. \end{equation}
%The Boltzmann entropy measures how fast the number of graphs with edge and 
%triangle density (close to) $(e,t)$ grows with $n$.
\noindent If the supremum of $S(g)$ is attained by a unique graphon $g_{e,t} \in
\calG(e,t)$, then all but exponentially few large graphs with the constrained
edge and triangle densities are described by $g_{e,t}$.
%%\footnote{The quantities $\edens(g)$, $\tdens(g)$ and $S(g)$ are all invariant under measure-preserving transformations of the interval $[0,1]$. 
%When we say that a certain graphon is a unique $S$-maximizer subject to the constraints $\edens(g)=e$ and $\tdens(g)=t$, this is always understood to mean ``unique up to 
%measure-preserving transformations''.}  
In particular, the density of all possible subgraphs are given by 
integrals involving $g_{e,t}$. 

Our first major result is:
\begin{theorem}\label{main1}
There is an open subset ${\calO}$ of the $(e,t)$ plane,
containing the interval $e=\frac12$,\ $0<t<\frac18$, on which
the unique $S$-maximizing
graphon is  
\begin{equation}\label{symbipodal} g_{e,t}(x,y) = \begin{cases}
    e- (e^3-t)^{1/3},\  &  0 < x,y < \frac12
\hbox{ or } \frac12<x,y<1,\cr
e+(e^3-t)^{1/3},\  & \, x<\frac12< y \hbox{ or }
y < \frac12<x. \end{cases} \end{equation}
\end{theorem}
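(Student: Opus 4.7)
The plan is to prove the theorem on the symmetry segment $e=\tfrac12$, $0<t<\tfrac18$ first, then extend by perturbation to an open neighborhood $\calO$. Existence of a maximizer on $\calG_{e,t}$ follows from compactness of graphon space in the cut metric together with upper semicontinuity of $S$. Any maximizer satisfies the Euler--Lagrange equation
\begin{equation*}
H'(g(x,y)) \;=\; \alpha + 3\beta \int_0^1 g(y,z)\,g(z,x)\,dz \quad \text{a.e.,}
\end{equation*}
for some Lagrange multipliers $\alpha,\beta$. Setting $K_g(x,y):=\int_0^1 g(y,z)g(z,x)\,dz$, the candidate \eqref{symbipodal} gives a $K_g$ equal to $(a^2+b^2)/2$ on the diagonal blocks and $ab$ on the off-diagonal blocks; the two unknowns $\alpha,\beta$ can be chosen to match the two values $H'(a),H'(b)$, and one checks directly that $\edens(g_{e,t})=e$, $\tdens(g_{e,t})=t$. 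The associated multiplier is $\beta=2[H'(a)-H'(b)]/[3(a-b)^2]>0$ whenever $a<b$, i.e.\ whenever $t<e^3$.

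On the line $e=\tfrac12$, write $g=\tfrac12+h$ with $\iint h=0$. Then $S(g)=\ln 2-2\iint h^2+R(h)$ with $R$ collecting quartic and higher terms, and the triangle constraint becomes
\begin{equation*}
\tau(h)+\tfrac32\int_0^1 d_h(y)^2\,dy \;=\; t-\tfrac18, \qquad d_h(y):=\int_0^1 h(x,y)\,dx.
\end{equation*}
The bipodal candidate $h_*(x,y)=-\delta\,s(x)s(y)$ with $\delta=(1/8-t)^{1/3}$ and $s=\1_{x<1/2}-\1_{x>1/2}$ is rank one, orthogonal to constants ($d_{h_*}\equiv 0$), and attains $\tau(h_*)=-\delta^3$, $\iint h_*^2=\delta^2$. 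Using the spectral decomposition $h=\sum_i\lambda_i\phi_i\otimes\phi_i$, one has $\iint h^2=\sum\lambda_i^2$, $\tau(h)=\sum\lambda_i^3$, and $\int d_h^2=\sum\lambda_i^2\langle\phi_i,1\rangle^2$; under the constraints, concentrating the negative cubic mass in a single eigenvalue orthogonal to constants minimizes $\iint h^2$. Within rank-one profiles $h=-\delta\,\phi\otimes\phi$ (with $\|\phi\|_2=1$, $\langle\phi,1\rangle=0$), the full entropy expansion involves $\iint(\phi(x)\phi(y))^{2k}=(\int\phi^{2k})^2$ with negative coefficient at every order, and Cauchy--Schwarz gives $\int\phi^{2k}\ge 1$ with equality iff $\phi=\pm 1$ almost everywhere. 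These inequalities combine to single out $h_*$ as the unique maximizer up to measure-preserving transformations.

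The hardest step will be turning this heuristic into a fully non-perturbative argument valid uniformly down to $t\to 0$, where $\delta\to\tfrac12$ and the entropy expansion cannot be truncated. My plan is to couple the E--L equation with a level-set classification: iterate the identity $H'(g)=\alpha+3\beta K_g$ to show any maximizer is piecewise constant on the level sets of $K_g$, then use the $g\mapsto 1-g$ symmetry at $e=\tfrac12$ together with $\beta>0$ to reduce to a two-block structure, after which a finite-dimensional analysis among two-block graphons identifies the symmetric ($p=\tfrac12$, $a+b=1$) choice as the unique global maximum. To extend from $e=\tfrac12$ to an open neighborhood $\calO$, I apply the implicit function theorem to the E--L equation at the symmetric bipodal maximizer: the constrained Hessian of $S-\alpha\edens-\beta\tdens$ is strictly negative definite transverse to the finite-dimensional orbit generated by measure-preserving transformations and the pod swap $x\leftrightarrow 1-x$, and this invertibility persists in a neighborhood. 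This yields a smooth family of bipodal critical points matching \eqref{symbipodal} which remain strict local maxima on $\calO$; a compactness argument (if a competing global maximizer existed along a sequence $(e_n,t_n)\to(\tfrac12,t_0)$, a cut-metric subsequential limit would contradict the uniqueness established at $e=\tfrac12$) upgrades local to global.
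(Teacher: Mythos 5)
Your proposal takes a genuinely different route from the paper and has several gaps, the most serious of which is the final step (local-to-global).

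On the line $e=\tfrac12$, your rank-one reduction correctly captures the paper's Proposition~\ref{rank-1} (the Cauchy--Schwarz observation that $\int\phi^{2k}\ge 1$ with equality iff $\phi^2\equiv 1$). But reducing to rank-one profiles is exactly the hard part, and your sketch of it is not a proof. The ``level-set classification'' is too vague: iterating $H'(g)=\alpha+3\beta K_g$ shows the maximizer is constant on level sets of $K_g$, but gives no control on how many levels there are (a priori infinitely many). And the invocation of the $g\mapsto 1-g$ symmetry does not do what you claim: that map fixes $\edens=\tfrac12$ and $S(g)$, but it sends $\tdens(g)$ to $\tfrac14-\tdens(g)+3\bigl(\int d(x)^2dx-\tfrac14\bigr)$, i.e.\ it exchanges the undersaturated regime $t<\tfrac18$ with the supersaturated one, so it cannot be used to reduce an arbitrary $t<\tfrac18$ maximizer to a two-block structure. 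The paper avoids these issues entirely by working directly with the expansion $g=e+g_1+g_2$, proving an \emph{a priori} entropy bound $S(g)\le H\bigl(\tfrac12+\sqrt{\delta^2+\sigma^2}\bigr)$, and showing the symmetric bipodal graphon saturates it to $O(\delta^2)$.

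The extension step has a gap that mirrors what the paper's Sections~\ref{sec:apriori}--\ref{sec:pointwise} are really for. The implicit function theorem would give a smooth family of bipodal critical points, and negative definiteness of the constrained Hessian gives strict local maximality --- but only in a topology like $L^2$ or $L^\infty$. Your compactness argument, on the other hand, produces a limit only in the \emph{cut metric} (which is weaker), and $S$ is merely upper semicontinuous there. So a sequence of hypothetical competing maximizers $g_n\to g_{1/2,t_0}$ in cut metric does not place the $g_n$ in the $L^2$-ball where the Hessian controls anything; a single outlying strip of vanishing measure but pointwise value far from $e\pm\sigma$ is invisible to the cut metric but fatal to a Hessian estimate. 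Bridging cut-metric (or even $L^2$) closeness to pointwise closeness is precisely the content of the paper's Section~\ref{sec:pointwise} (Propositions~\ref{prop-g2-small} and~\ref{prop-g1-small}), and your outline omits it. Relatedly, the orbit of a graphon under measure-preserving transformations is infinite-dimensional, not finite-dimensional, so ``transverse to the finite-dimensional orbit'' does not describe the actual degeneracy you would need to quotient out before inverting the Hessian. The paper sidesteps all of this by never trying to invert a linearization: it instead establishes quantitative closeness (Section~\ref{sec:apriori}), upgrades it to pointwise (Section~\ref{sec:pointwise}), and then does a direct cost-benefit expansion in the three scalar defect parameters $\alpha,\beta,\gamma$ (Section~\ref{sec:cost-benefit}) to force them to vanish.
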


%{\bf Theorem}.\ There is an open set in the $\edens,\tau$ plane,
%containing the line interval $\{ (\edens,\tau)\,|\,\edens=1/2,\, 0<\tau<1/8\}$,
%in which there is a unique (reduced) graphon $\hat g_{\edens,\tau}$
%maximizing $S$, and that graphon is:

%$$\eqalign{q(x,y) &= \edens -(\edens^3-\tau^3)^{1/3},\ 0<x<1/2,\,
%0<y<1/2\hbox{ and } 1/2<x<1,\, 1/2<y<1\cr
%&= \edens +(\edens^3-\tau^3)^{1/3},\, 0<x<1/2,\,1/2<y<1\hbox{ and }
%1/2<x<1,\, 0<y<1/2}.
%$$
See Figures \ref{FIG:openset} and \ref{FIG:bipartite3c}.
The following is immediate by inspection.

\begin{corollary}\ $B(e,t)$ and the densities of all subgraphs
are real analytic in $(e,t)$ in ${\calO}$.\end{corollary}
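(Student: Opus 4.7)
The plan is to use the explicit formula \eqref{symbipodal} from Theorem~\ref{main1} together with the defining integrals in \eqref{3-quantities} and the variational identity \eqref{variation}. On $\calO$, the maximizer $g_{e,t}$ takes just two values, $a_\pm(e,t) := e \pm (e^3-t)^{1/3}$, on a fixed partition of $[0,1]^2$ into four rectangles. I would first check that $(e,t) \in \calO$ forces $0 < a_-(e,t) \le a_+(e,t) < 1$ and $e^3 - t > 0$ (which is immediate from the description of $\calO$ near the segment $e=\tfrac12$, $0<t<\tfrac18$, so the cube root is being taken of a strictly positive quantity). Consequently $a_\pm$ are real analytic functions of $(e,t)$ on $\calO$.

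Next I would write $B(e,t)=S(g_{e,t})$ directly from \eqref{variation} and the uniqueness clause of Theorem~\ref{main1}. Since the partition of $[0,1]^2$ on which $g_{e,t}$ is piecewise constant has fixed rectangles of area $1/4$ each, the integral defining $S$ collapses to
\begin{equation}
B(e,t) = \tfrac12\, H(a_+(e,t)) + \tfrac12\, H(a_-(e,t)).
\end{equation}
The function $H(u) = -u\ln u - (1-u)\ln(1-u)$ is real analytic on $(0,1)$, and $a_\pm$ are real analytic on $\calO$ with values in $(0,1)$. Hence $B$ is a composition of real analytic functions, therefore real analytic on $\calO$.

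For the density of an arbitrary subgraph $F$, the homomorphism density $t(F, g_{e,t})$ is an iterated integral of a product of factors $g_{e,t}(x_i,x_j)$ over a cube $[0,1]^{V(F)}$. Splitting each $[0,1]$ at $1/2$ decomposes this into a finite sum over assignments of vertices of $F$ to $\{0,1\}$, and on each piece the integrand is a constant equal to a monomial in $a_\pm(e,t)$, multiplied by a fixed volume. Thus $t(F, g_{e,t})$ is a polynomial in $a_\pm(e,t)$, hence real analytic in $(e,t)$ on $\calO$.

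The only step that is not purely mechanical is verifying that the two values $a_\pm(e,t)$ really do lie strictly inside $(0,1)$ and that $e^3-t>0$ throughout $\calO$; this is what makes both the cube root in $a_\pm$ and the logarithms in $H$ analytic. I would handle it by noting that these are open conditions satisfied on the segment $e=\tfrac12$, $0<t<\tfrac18$, and then (if necessary) shrinking $\calO$ to the open subset where they continue to hold—which is still open and still contains that segment, so Theorem~\ref{main1} applies without change.
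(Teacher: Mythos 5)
Your proof is correct and is exactly what the paper's remark ``The following is immediate by inspection'' leaves unsaid: since $g_{e,t}$ is the piecewise-constant graphon \eqref{symbipodal} with the two values $e\pm(e^3-t)^{1/3}$ lying strictly in $(0,1)$ on $\calO$, one gets $B(e,t)=\tfrac12 H(a_+)+\tfrac12 H(a_-)$ and every subgraph density is a polynomial in $a_\pm$, so all are real analytic. No gap; the paper simply does not write out these steps.
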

\begin{figure}[ht]
\centering
\includegraphics[angle=0,width=0.4\textwidth]{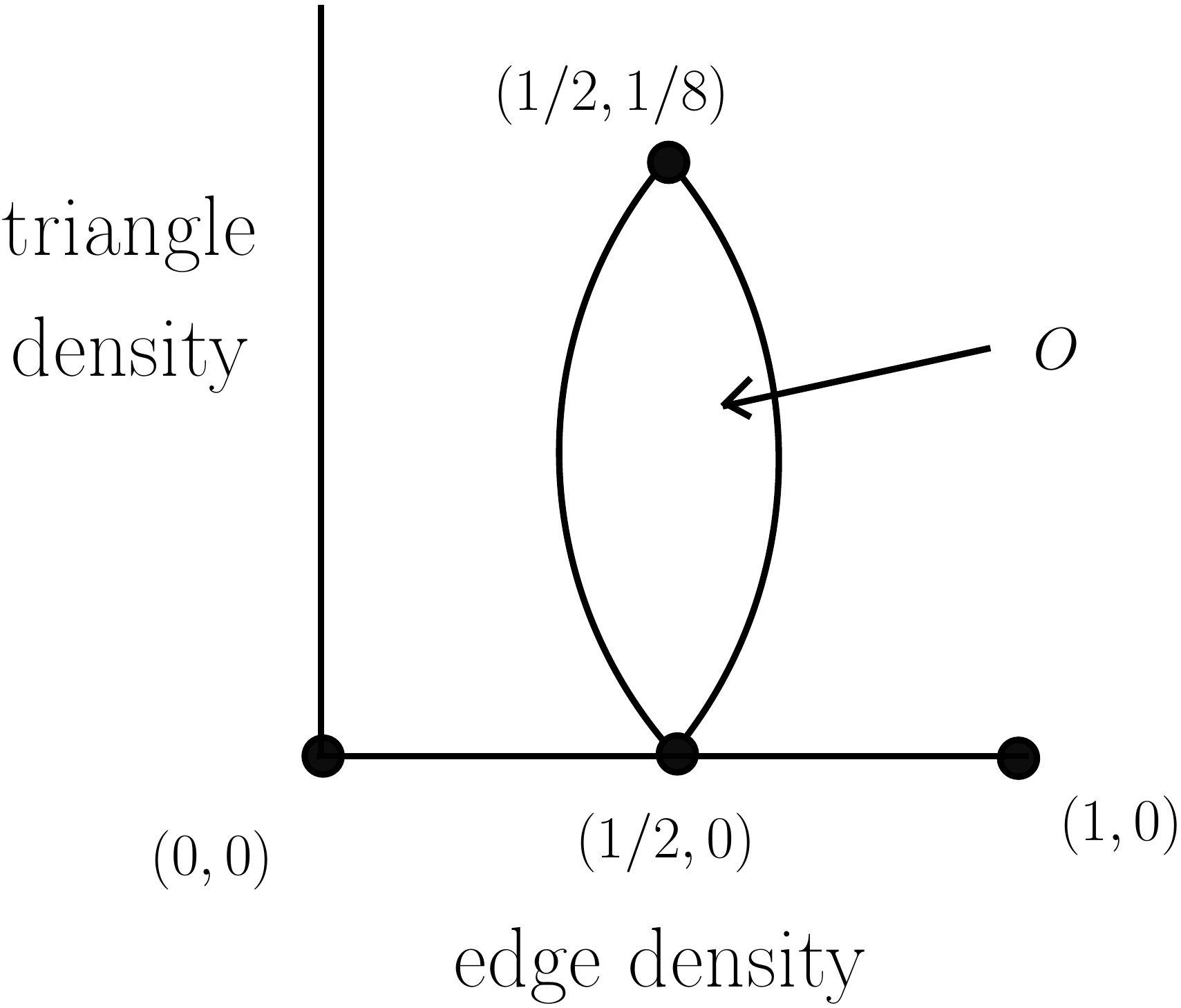}
\caption{The open set $\calO$ of Theorem \ref{main1}}
\label{FIG:openset}
\end{figure}

\begin{figure}[ht]
\centering
\includegraphics[angle=0,width=0.6\textwidth]{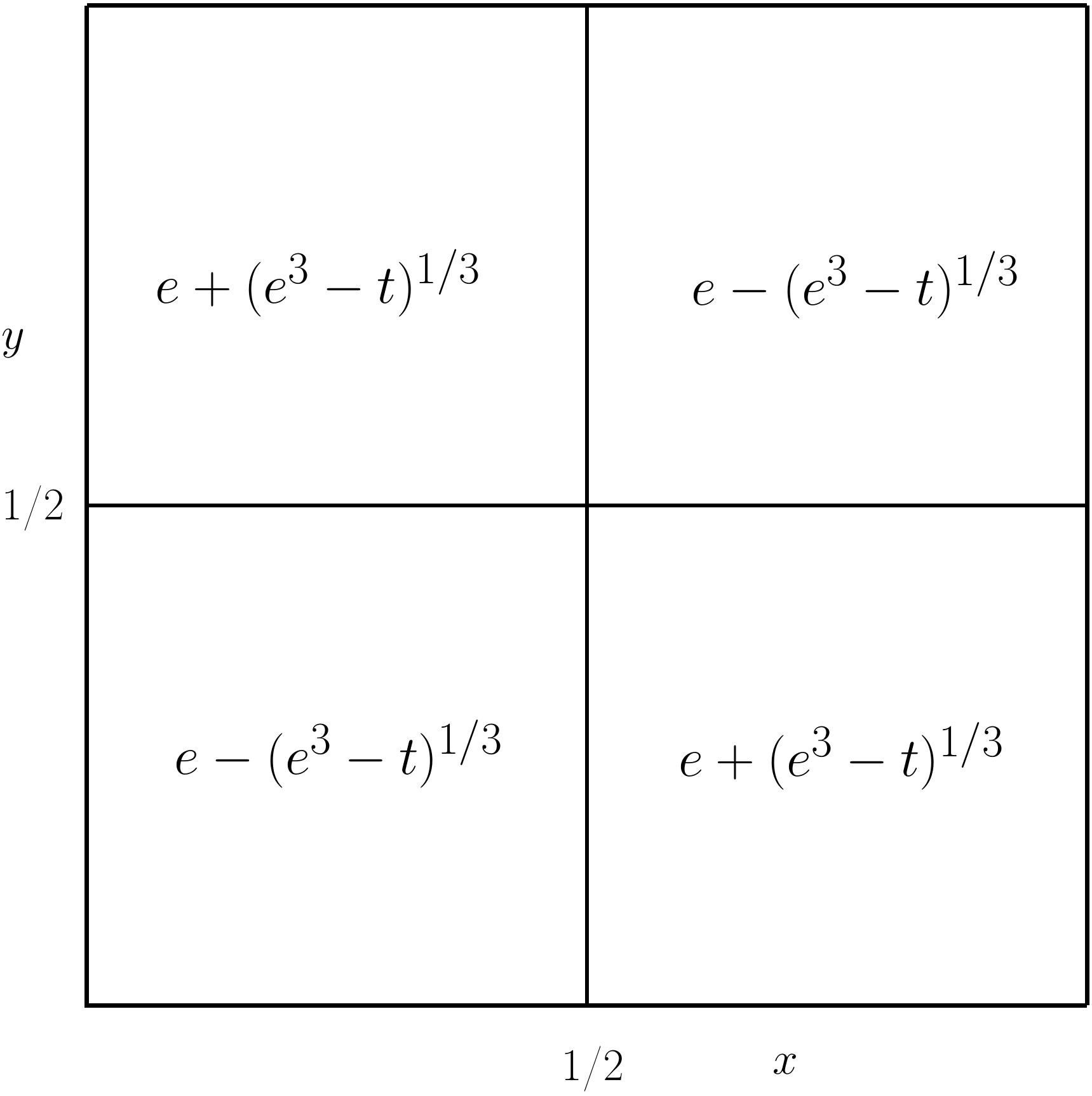}
\caption{The piecewise-constant optimal graphon of Theorem \ref{main1}}
\label{FIG:bipartite3c}
\end{figure}

Our second major result describes a region where the optimizing graphon is not symmetric bipodal.

\begin{theorem}\label{main3}
Let $e_0 = (3-\sqrt{3})/6 \approx 0.2113$. For any fixed edge density 
$e<e_0$ and any sufficiently small positive $\sigma$, the symmetric bipodal
graphon (\ref{symbipodal}) does not maximize $S$ among graphons with triangle 
density $t=e^3-\sigma^3$.
\end{theorem}

%vskip.2truein \noindent
\subsection{Background}
%\vskip.05truein

This work is concerned with emergent features \cite{A} of large dense
simple graphs, features that are only meaningful in the limit
as the number $n$ of vertices goes to infinity. We will concentrate on
graphs with competing constraints, specifically the prescribed
densities $e$ and $t$, edges and triangles, respectively.  The
emergent behavior is {\em smoothness} as a function of $(e,t)$, of the
Boltzmann entropy $B(e,t)$ and of all subgraph densities.

The study of graphs with competing constraints is an old topic in
extremal combinatorics. For graphs, the range of achievable values of
the pair $(e,t)$, and the graphs that achieve them, was completed in
2012 by Purkurko and Razborov \cite{PR}: see Figure \ref{FIG:Razborov}
for a distorted view of the ``Razborov triangle''.

A graphon formalism was developed starting around 2006
\cite{BCL,BCLSV,LS1,LS2,LS3} to give a
useful meaning to the asymptotic limits of dense graphs.  A large
deviation principle (LDP) was added by Chatterjee and Varadhan in 2010
\cite{CV}. Using graphons we define a {\em
  phase} as a maximal connected open subset of the Razborov triangle
on which $B(e,t)$, and the density of every fixed subgraph (e.g., the
density of squares, pentagons, tetrahedra, $\ldots$) of a typical
graph, is an analytic
function of $(e,t)$. The system is said to have a {\em phase
  transition} wherever such quantities are not analytic. Usually this
occurs at the boundary of two or more phases, but sometimes there is
an analytic path from one side of a phase transition to another, as
discussed in Section \ref{sec:ordparam}. Note
that if the optimal graphon in the variational formula (\ref{variation}) is
not unique, the densities of some subgraphs are not even
well-defined, much less analytic; by definition, such points of non-uniqueness can
never lie within a phase. Even when uniqueness does hold, it can be
difficult to prove. On the other hand, where there is a
unique entropy-optimizing graphon $g_{e,t}$, all but exponentially few large graphs are close to
$g_{e,t}$; this facilitates the analysis of emergent features. (Such
uniqueness is known to fail in similar models; see Section 5 in
\cite{KRRS1}.)
\begin{figure}[ht]
\centering
\includegraphics[angle=0,width=0.7\textwidth]{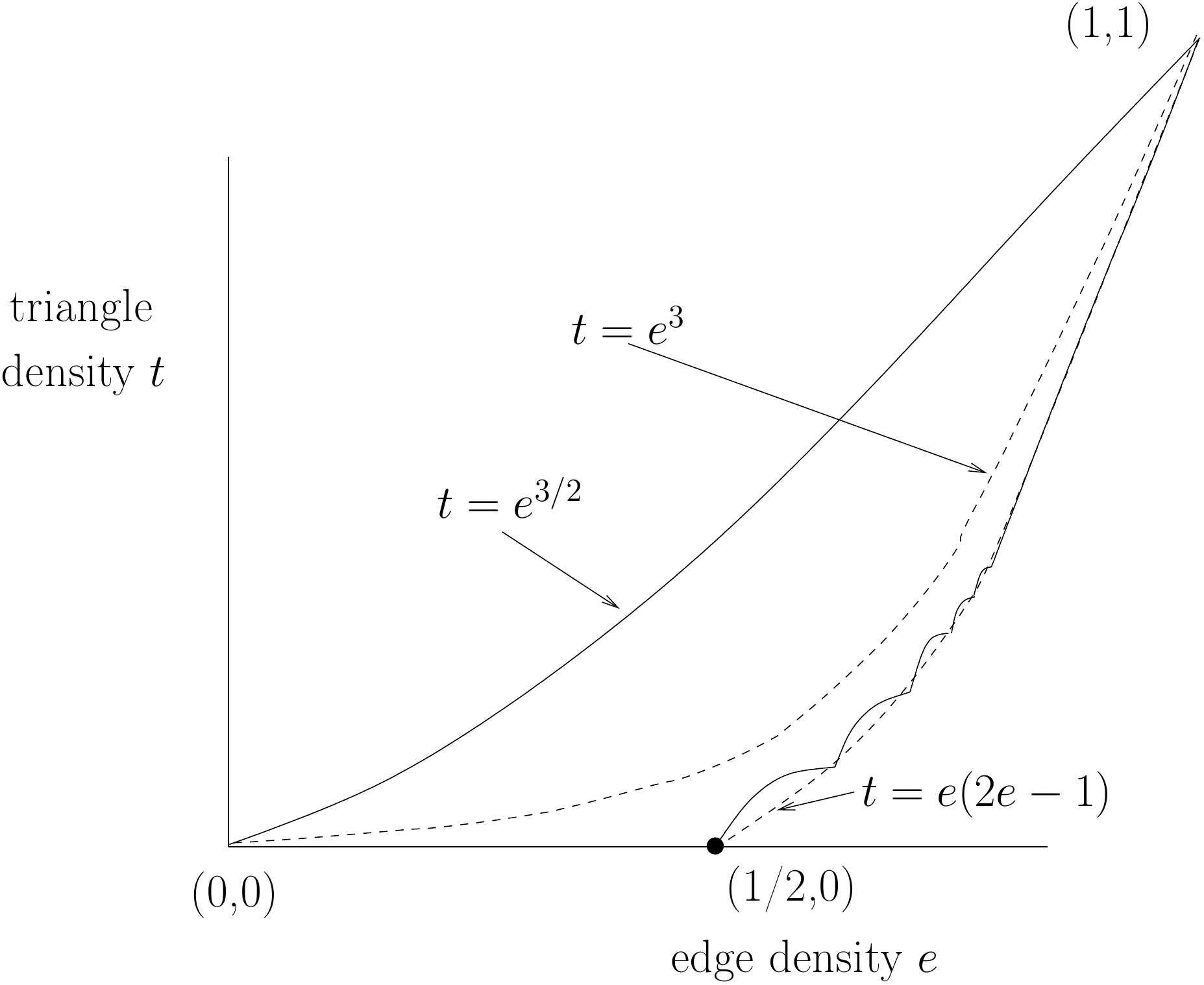}
\caption{The Razborov triangle, outlined in solid curves}
\label{FIG:Razborov}
\end{figure}

This paper is part of a project begun in 2013 \cite{RS1}, following
\cite{CD}, to study emergent analyticity of typical graphs with edge
and triangle constraints in the Razborov triangle. Using \cite{CV} we
derived \cite{RS1} the variational formula (\ref{variation}) relating
the Boltzmann entropy $B(e,t)$ to the graphon entropy $S(g)$. Let $G$
be a fixed subgraph, such as a square or pentagon or tetrahedron.
When $S(g)$ achieves the value $B(e,t)$ at a unique graphon, all but
exponentially few graphs have the same density $t_G$ of $G$, so we can
speak of $t_G$ being a function of $(e,t)$ and ask whether that
function is analytic.  (Our method was based on large deviations of
$G(n,p)$ \cite{CV}; for an approach based on $G(n,m)$ see \cite{DL}.)

In 2015 we established \cite{KRRS1} the existence of two open subsets
of the Razborov triangle, both with $t>e^3$, 
in which $B(e,t)$ and all subgraph densities were real-analytic in
$(e,t)$.
To prove this we proved that the constrained $S$-optimizing graphons
were unique, and determined that they have a 2-block (``bipodal'')
\cite{KRRS1} structure whose parameters were analytic in $(e,t)$.  
More recently, we proved
\cite{NRS2} a complementary result in the more difficult case of
undersaturated triangles ($t < e^3$) when $e > \frac12$.  This yielded
the satisfying result of a pair of open sets, $\calO_1,\calO_2$, on
which the Boltzmann entropy $B(e,t)$ and all subgraph densities are analytic, 
separated by the bounding curve $t=e^3$, on which the
entropy isn't even differentiable \cite{RS2}. See Figure
\ref{FIG:transition}.

\begin{figure}[ht]
\centering
\includegraphics[angle=0,width=0.6\textwidth]{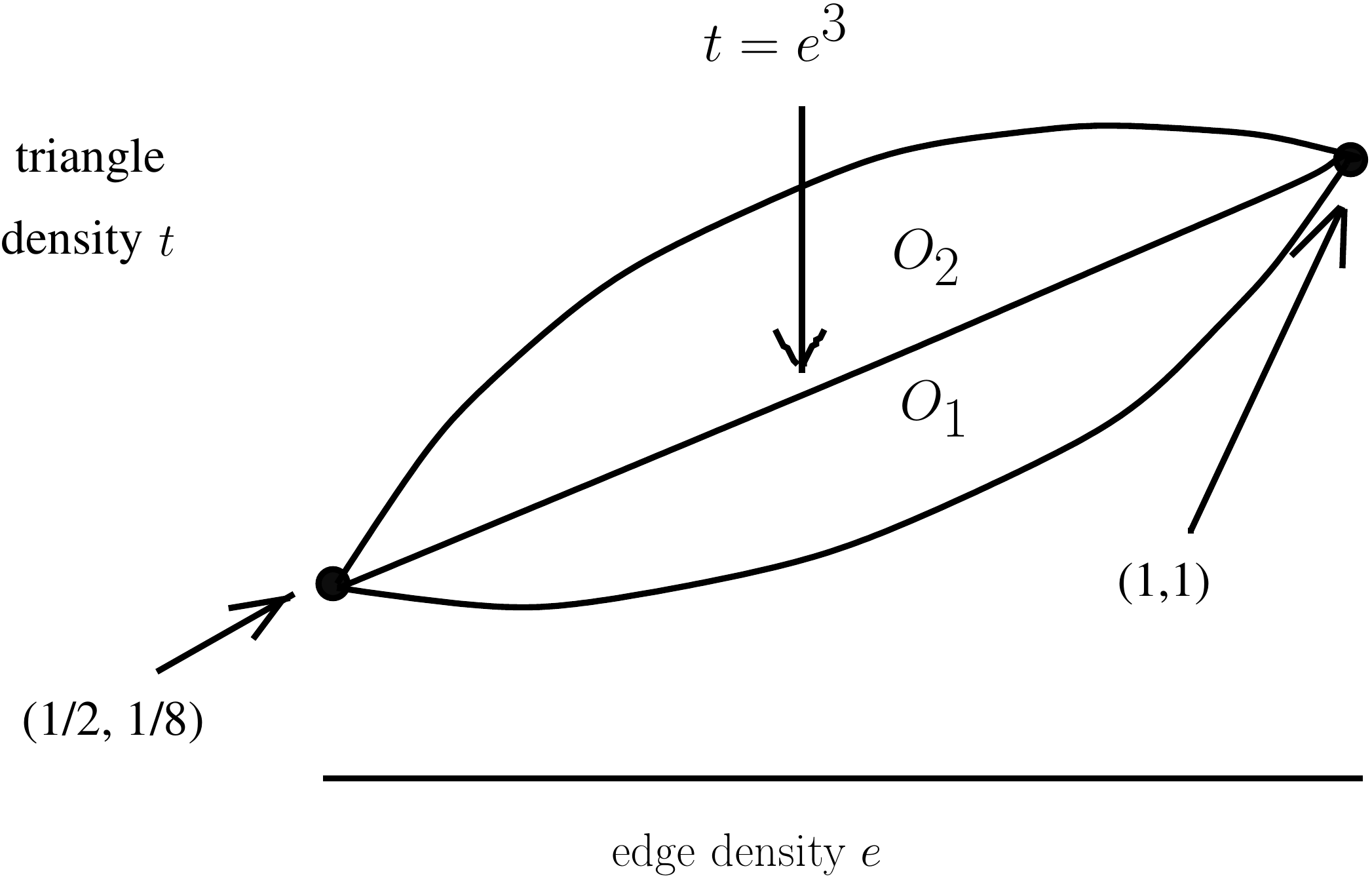}
\caption{The phase transition between $\calO_1$ and $\calO_2$ was proven in \cite{NRS2}.}
\label{FIG:transition}
\end{figure}

Put another way, in \cite{NRS2} we proved the existence of two phases
with $e>\frac12$, one just above the curve $t=e^3$ and one just below,
with a phase transition on the curve itself. As noted earlier, within
each phase there must be a unique $S$-optimizing graphon for each
$(e,t)$. This optimal graphon is called the ``state'' of the system.
As discussed above, models of dense graphs with sharp competing
constraints are a natural extension of extremal graph theory. We note
there have been parallel studies within other parts of extremal
combinatorics, for instance permutations and sphere packings; see
Section \ref{sec:Summ}.
%{\tt I have absolutely no idea what this
%  parenthetical is driving at. I'm leaving it in for now, but please
%  explain.})

One may reasonably ask how we know that the two open sets $\calO_1$
and $\calO_2$ actually belong to different phases; how can we rule out
the possibility that there is an analytic path between them, going
{\em around} the phase transition? Neither the $\calO_1$ phase nor the
$\calO_2$ phase seems to have any intrinsic property that clearly rules out an
analytic continuation between them.  Such an analytic
continuation does not actually exist, since we previously showed
\cite{RS2} that $B(e,t)$ cannot be differentiable at any point on
the curve $t=e^3$ for any $0<e<1$.
However that isn't a very satisfying explanation.

The ``symmetric bipodal'' phase whose existence we prove in this paper is different. The symmetry provides an {\em
intrinsic} difference between the new phase and the $\calO_1$ and $\calO_2$ phases. In Section \ref{sec:ordparam} 
we discuss the connection between our symmetry argument and the use of ``order
parameters'' in equilibrium statistical mechanics, and as a by-product
we clarify a problematic argument of Landau from the 1950s.

Another key difference between our new phase and the previously proven phases is that proof of 
the symmetric bipodal phase is not limited to a small neighborhood of the curve $t=e^3$. In \cite{KRRS1}, and 
again in \cite{NRS2}, we studied small perturbations of the Erd\H{o}s-R\'enyi graph $G(n,p)$ and attempted to get the
greatest possible change in triangle count for the smallest possible entropy cost. The results are closely related
to moderate deviation estimates \cite{NRS3}; depending on the sizes of $n$ and $e^3-t$, a finite graph with triangle 
density slightly less than $e^3$ can be viewed either as a typical $(e,t)$ graph or as a deviation of an Erd\H{o}s-R\'enyi
graph. When $e < 1/2$, moderate deviations estimates that apply when $n^{-1} \ll e^3-t \ll 1$ agree to leading order
with large deviations estimates.

This is reflected in the different method of proof. In Theorem \ref{main3}, $\sigma$ is not a small parameter. We 
can still do a power series expansion in $\sigma$, but we have to estimate all terms, not just the first few.

\section{Definitions and Notation}

\begin{figure}[ht]
\centering
\includegraphics[angle=0,width=0.4\textwidth]{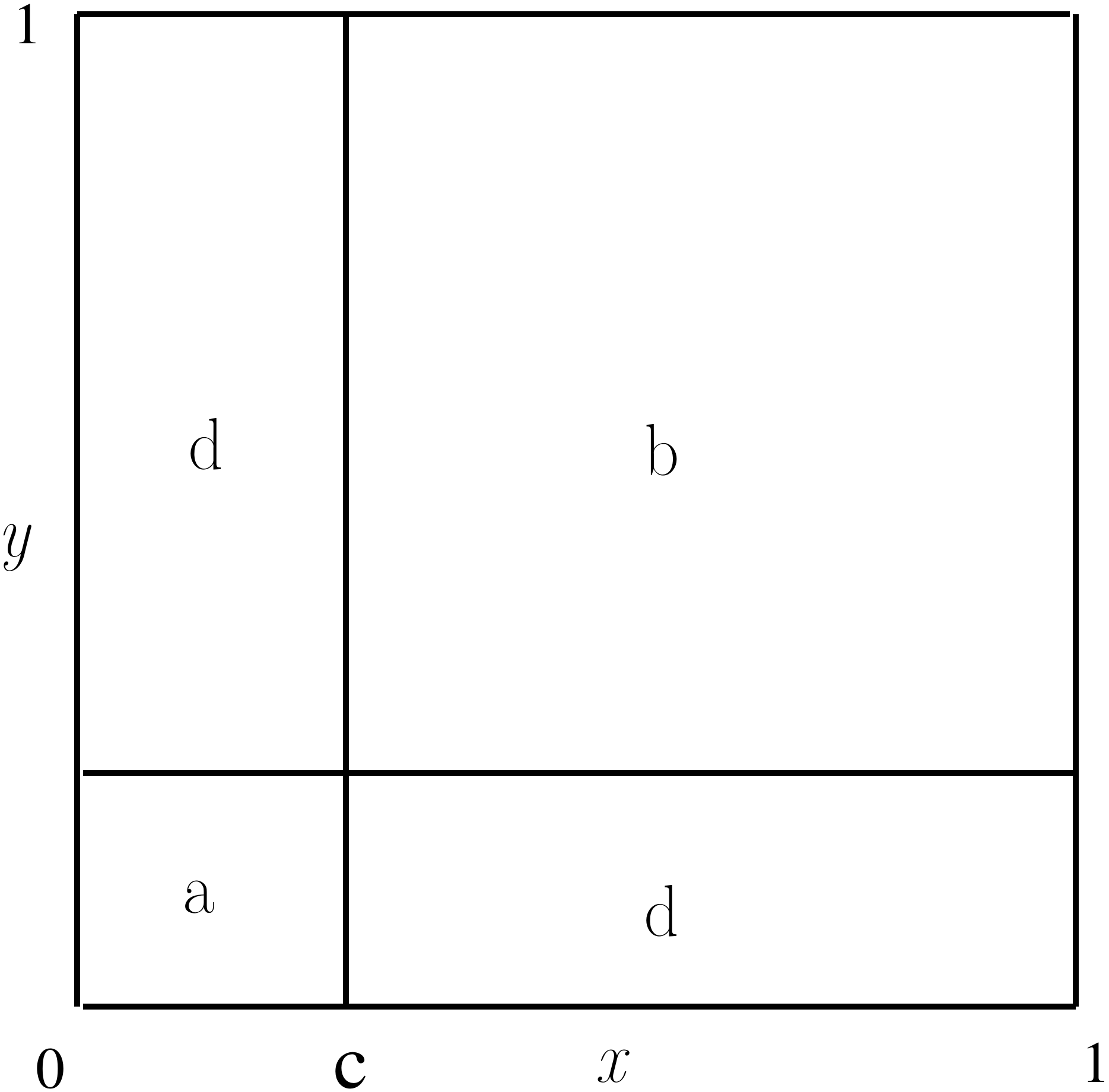}
\caption{The parameters of a bipodal graphon}
%the right,
%the Erd\H{o}s-R\'enyi curve is shown with dashes. NEEDS WORK}
\label{FIG:bipodal}
\end{figure}

A graphon is said to be {\em bipodal} if it is equivalent to a graphon with the block structure shown in Figure 
\ref{FIG:bipodal}. 
It is {\em symmetric bipodal} if $c = 1/2$ and $a=b$. To avoid questions about graphons being equivalent under 
measure-preserving transformations of $[0,1]$, we restate the definition using arbitrary measurable subsets $I_1$ and 
$I_2$ of $[0,1]$, rather than intervals $[0,c]$ and $(c,1]$. A graphon is bipodal if there exist complementary
measurable subsets $I_1$ and $I_2$ such that $g(x,y)$ is constant on $I_1 \times I_1$, constant on $I_2 \times I_2$, 
and constant on $I_1 \times I_2 \cup I_2 \times I_1$. 
A graphon is symmetric bipodal if there exist complementary
subsets $I_1$ and $I_2$, each of measure 1/2, and a positive number $\sigma < e$, such that the graphon is 
\begin{equation}\label{symbip} g(x,y) = \begin{cases} e-\sigma & (x,y) \in I_1\times I_1 \cup I_2 \times I_2, \cr 
e+\sigma & (x,y) \in I_1\times I_2 \cup I_2 \times I_1, \end{cases} \end{equation}
The edge density,
triangle density and entropy of a symmetric bipodal graphon are
\begin{equation} \edens(g)=e, \qquad \tdens(g)=e^3-\sigma^3, \qquad S(g) = \frac12 (H(e+\sigma) + H(e-\sigma)).
\end{equation}
Of course, this only makes sense if $\sigma \le \min(e, 1-e)$.  
Another characterization of a symmetric bipodal graphon
is that it is a rank-1 perturbation of a constant graphon, with \[ g(x,y) = e -\sigma v_1(x) v_1(y), \]
where $\int_0^1 v_1(x) dx=0$ and $v_1(x)^2=1$ everywhere. 

%The two main results of this paper concern regions in the $(e,t)$ plane where the entropy-optimizing graphon with edge density $e$
%and triangle density $t$ either is, or isn't, symmetric bipodal. 

It was previously known that the unique optimal graphon on the open line interval $e=1/2$, $t \in (0,1/8)$ 
was symmetric bipodal \cite{RS2}. Our main result, Theorem \ref{main1}, extends this
to an open set $\calO$ containing the line interval. It is convenient for our proofs to reformulate
Theorem \ref{main1} as
\begin{theorem}\label{main2}
For fixed $\sigma \in \left (0 ,\frac12\right )$ and for all
sufficiently small $\delta$ (of either sign), the unique $S$-maximizing
graphon with 
edge density $e=\frac12+\delta$ and triangle density $t=e^3-\sigma^3$ is symmetric bipodal. Furthermore, the size of the allowed
interval of $\delta$'s varies continuously with $\sigma$. 
\end{theorem}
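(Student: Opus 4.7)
My strategy is a compactness-plus-local-analysis argument starting from the known result \cite{RS2} that at $\delta = 0$ the unique $S$-maximizer on $\calG_{1/2,\,1/8-\sigma^3}$ is the symmetric bipodal graphon $g_0$ of \eqref{symbip}. I would argue by contradiction: suppose there is a sequence $\delta_n \to 0$ and maximizers $g_n \in \calG_{e_n,t_n}$, with $e_n = \tfrac12+\delta_n$ and $t_n = e_n^3-\sigma^3$, none of which is symmetric bipodal.

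By cut-metric compactness and upper semicontinuity of $S$, along a subsequence $g_n$ converges (after choosing representatives) to the $\delta = 0$ maximizer $g_0$. Each $g_n$ satisfies the Euler--Lagrange equation
\begin{equation*}
\ln\frac{1-g_n(x,y)}{g_n(x,y)} \;=\; \alpha_n + 3\beta_n \int_0^1 g_n(x,z)\,g_n(z,y)\,dz,
\end{equation*}
and one shows that the multipliers converge to $\alpha_0,\beta_0$ determined by $g_0$, where $\beta_0 = \frac{1}{3\sigma^2}\ln\frac{1/2+\sigma}{1/2-\sigma}$. Since $g_n\star g_n \to g_0\star g_0$ in $L^2$, the E--L equation forces $g_n$ to be uniformly bounded away from $\{0,1\}$ and to converge pointwise a.e.\ after relabeling, yielding an approximate bipartition $(I_1^{(n)},I_2^{(n)})$ on which $g_n$ is close to the candidate symmetric bipodal graphon $\phi_n$ at $(e_n,\sigma)$.

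With this strong convergence in hand, write $g_n = \phi_n + h_n$ with $h_n \to 0$. The constraints $\edens(g_n) = e_n$ and $\tdens(g_n) = t_n$ force $\iint h_n\,dx\,dy = 0$ and
\begin{equation*}
3\langle h_n,\,\phi_n\star\phi_n\rangle + 3\iiint \phi_n(x,y)\,h_n(y,z)\,h_n(z,x)\,dx\,dy\,dz + \iiint h_n(x,y)\,h_n(y,z)\,h_n(z,x)\,dx\,dy\,dz = 0.
\end{equation*}
The analytic crux is then a Hessian inequality
\begin{equation*}
\iint \frac{h(x,y)^2}{g_0(x,y)(1-g_0(x,y))}\,dx\,dy \;+\; 6\beta_0 \iiint g_0(z,x)\,h(x,y)\,h(y,z)\,dx\,dy\,dz \;\geq\; c\,\|h\|_2^2
\end{equation*}
for some $c > 0$, valid on all symmetric $h$ with $\iint h = 0$, $\langle h,g_0\star g_0\rangle = 0$, and transverse to infinitesimal measure-preserving transformations. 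Combining this with a second-order Taylor expansion of $\mathcal{L}(g) = S(g) - \alpha\edens(g) - \beta\tdens(g)$ at $\phi_n$ and using the above cubic constraint to absorb the error term, one deduces $S(g_n) < S(\phi_n)$ unless $h_n \equiv 0$, contradicting the choice of $g_n$.

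The main obstacle is proving the Hessian inequality. Because $\tdens$ is cubic, the cross-term $\beta_0\langle h,Th\rangle$ with $(Th)(x,y) = \int g_0(z,x)\,h(y,z)\,dz$ is indefinite, so one must exploit the specific structure of $g_0$. The rank-one-perturbation form $g_0(x,y) = \tfrac12 - \sigma v_1(x)v_1(y)$ allows a decomposition of $L^2([0,1]^2)$ according to the $\Z_2$ action $v_1 \mapsto -v_1$ and whether $h$ is block-constant; this reduces positivity to a finite-dimensional computation on the four block-constant modes plus a uniformly positive diagonal contribution from modes orthogonal to the bipartition within each block. Continuous dependence of the allowed $\delta$-interval on $\sigma$ then follows because the positivity margin $c$ and all bootstrap constants depend continuously on $\sigma$ on compact subsets of $(0,\tfrac12)$.
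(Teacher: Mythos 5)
Your approach --- compactness-by-contradiction plus a Hessian inequality at the limit --- is genuinely different from the paper's. The paper avoids compactness entirely and works quantitatively: Section~\ref{sec:apriori} proves an a priori upper bound $S(g) \le H\bigl(\tfrac12 + \sqrt{\delta^2+\sigma^2}\bigr)$, shows the symmetric bipodal graphon comes within $O(\delta^2)$ of it, and deduces that the spectral remainder $g_2$, the mean $\gamma=\int v_1$, and the deficit $\beta^2=\int(v_1^2-1)^2$ are all $O(\delta)$ or $O(\delta^2)$. Section~\ref{sec:pointwise} then upgrades this to $L^\infty$ control using the Euler--Lagrange equation, and Section~\ref{sec:cost-benefit} runs a term-by-term cost-benefit analysis of the Taylor expansion of $H$ around $1/2$ in the \emph{intrinsic} spectral coordinates $(\alpha,\beta,\gamma)$ attached to $g$ itself rather than to the additive perturbation $h=g-\phi$. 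That nonlinear change of coordinates is the real trick: it decouples the constraints from the objective so that the positivity you are after appears as a sum of manifestly nonnegative moment terms, without ever computing a Hessian operator and diagonalizing it.

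Two places in your sketch paper over the bulk of the work. First, cut-metric convergence $g_n \to g_0$ does not give pointwise convergence, and the Euler--Lagrange route you invoke requires knowing the Lagrange multipliers converge, which has to be \emph{extracted} from the optimizers; the paper devotes Lemmas~\ref{bound-g1}--\ref{g-mostly-small} to bootstrapping $L^2$ smallness into $L^\infty$ smallness of $g_2$ and $v_1^2-1$. Crucially, the proof of Proposition~\ref{prop-g1-small} that the ``middle block'' $I_3$ (where $v_1\approx 0$) is empty is \emph{not} a Hessian or even an Euler--Lagrange argument; it is a one-sided boundary-variation argument on the partition, which your framework as stated does not contain. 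Second, even granting $L^\infty$ control, your claim that the cubic remainder in the Taylor expansion of $\mathcal{L}$ is absorbed by the constraint identity needs justification: perturbations $h_n$ that shrink a block have $\|h_n\|_\infty \sim \sigma$ (bounded but not $o(1)$), so the cubic term $\int H'''(\phi_n)h_n^3$ is only bounded by $C\,\|h_n\|_\infty\,\|h_n\|_2^2$, which is \emph{comparable to}, not dominated by, the putative positive quadratic form $c\,\|h_n\|_2^2$, and whether $c$ beats $C\sigma$ across all $\sigma\in(0,\tfrac12)$ is exactly what needs proving. The paper sidesteps this by treating each moment $\mu_{2k}$ separately and tracking signs of all terms; that is not the same thing as checking positive-definiteness of a single quadratic form, and it is where the work actually lives. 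So the route you propose is plausible, but as written it presupposes the two hardest facts rather than proving them.
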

\begin{corollary} The Boltzmann entropy $B(e,t)$ and the densities of all subgraphs
are real analytic functions of $(e,t)$ in the open set thus defined.\end{corollary}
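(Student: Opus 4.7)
The plan is to derive both assertions directly from the explicit description of the $S$-maximizing graphon supplied by Theorem \ref{main2}. On the open set furnished by that theorem, uniqueness of the maximizer $g_{e,t}$ means that the subgraph densities $t_G(g_{e,t})$ are well-defined functions of $(e,t)$. Moreover $g_{e,t}$ is the symmetric bipodal graphon (\ref{symbip}) with $|I_1|=|I_2|=1/2$ and parameter $\sigma=(e^3-t)^{1/3}$. Since the open set lies in the region where $0<\sigma<\min(e,1-e)$, both $e\pm\sigma$ lie in $(0,1)$, and $\sigma$ itself depends real-analytically on $(e,t)$ there (the cube root being analytic away from zero).

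For the Boltzmann entropy, combining the variational formula (\ref{variation}) with uniqueness gives
\[
B(e,t)=S(g_{e,t})=\tfrac12\bigl(H(e+\sigma)+H(e-\sigma)\bigr).
\]
Because $H$ is real analytic on $(0,1)$ and $\sigma$ is real analytic in $(e,t)$, the composition $B$ is real analytic on the open set.

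For the density of any fixed finite simple graph $G=(V,E)$, partitioning each integration variable of the defining integral according to which of $I_1,I_2$ it lies in turns the integral into a finite sum over colorings $\chi:V\to\{1,2\}$ weighted by $2^{-|V|}$, each summand being a product of factors $e-\sigma$ or $e+\sigma$ according to whether the endpoints of each edge of $G$ receive the same color under $\chi$. Thus $t_G(g_{e,t})$ is a polynomial in $(e,\sigma)$, and becomes, after substituting $\sigma=(e^3-t)^{1/3}$, a real analytic function of $(e,t)$ on the open set.

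The only substantive input is the explicit description of $g_{e,t}$ together with its uniqueness, both delivered by Theorem \ref{main2}; once those are in hand there is no real obstacle, and the corollary reduces to the observations above. To close up the argument cleanly one should also verify that the set produced by Theorem \ref{main2} is open in $(e,t)$-coordinates: this is immediate because the map $(\delta,\sigma)\mapsto\bigl(\tfrac12+\delta,\,(\tfrac12+\delta)^3-\sigma^3\bigr)$ is a local diffeomorphism (its Jacobian determinant equals $-3\sigma^2\ne 0$) and the set of admissible $(\delta,\sigma)$ is open by the continuous-dependence clause of Theorem \ref{main2}.
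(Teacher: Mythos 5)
Your argument is correct and is exactly the one the paper has in mind — it states the corollary as ``immediate by inspection,'' and your write-up simply carries out that inspection: express $B(e,t)=\tfrac12(H(e+\sigma)+H(e-\sigma))$ via uniqueness and the variational formula, observe that each subgraph density is a polynomial in $(e,\sigma)$ via the coloring expansion, and note that $\sigma=(e^3-t)^{1/3}$ is real analytic on the region where $\sigma>0$, and that the parameter set is genuinely open.
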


We expect that the region where the optimal graphon is symmetric bipodal is not
limited to the small open set described in Theorems \ref{main1} and
\ref{main2}. There is considerable numerical evidence that this
region, called the A(2,0) phase in \cite{KRRS3}, is much bigger than
that. However, there are provable limits to its extent. Theorem \ref{main3} says that it does not 
extend to the curve $t=e^3$ when $e < e_0 \approx 0.2113$. Theorem 1 from \cite{NRS2}, which we restate here,
says that it does not extend to the curve $t=e^3$ when $e>1/2$. It is an open question whether the phase extends 
to the curve $t=e^3$ when $e_0 < e < 1/2$.

\begin{theorem}\label{thm:b11} (Theorem 1 from \cite{NRS2})
There is an open subset ${\calO}_1$ in the planar set of achievable parameters $(e,t)$,
whose upper boundary is the curve $t = e^3,\  1/2<e<1$, such that
at $(e,t)$ in ${\calO}_1$ there is a unique entropy-optimizing
graphon $g_{e,t}$. This graphon is bipodal and for fixed $(e,t)$, the values of 
$a,b,c,d$ can be approximated to arbitrary accuracy via an explicit iterative scheme. These parameters can also be expressed via asymptotic power series in $\sigma=(e^3-t)^{1/3}$ whose leading terms
are:
%\begin{enumerate}
%\item for $(\edens,\tdens)$ within $O_1$ (so $\delta>0$)
\begin{eqnarray}\label{approximation:b11-abcd}
a & = & 1-e - \sigma + O(\sigma^2) \cr 
b & = & e - \frac{\sigma^2}{2e-1} + O(\sigma^3) \cr 
c & = & \frac{\sigma}{2e-1} - \frac{2\sigma^2}{2e-1} + O(\sigma^3) \cr 
d & = & e + \sigma + \frac{\sigma^2}{eH'(e)} \left ( H'(e) - \left (e-\frac12 \right ) H''(e) \right ) + O(\sigma^3). \end{eqnarray}

\end{theorem}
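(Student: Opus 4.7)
The plan is to treat $\sigma = (e^3-t)^{1/3}$ as a small perturbation parameter and expand around the Erd\H{o}s--R\'enyi baseline $g \equiv e$, which is the unique $S$-maximizer on the curve $t = e^3$.

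First I would write down the Euler--Lagrange equation for an interior critical point of $S$ under the two density constraints: with Lagrange multipliers $\alpha$ and $\beta$,
\[
\ln\frac{1-g(x,y)}{g(x,y)} = \alpha + 3\beta\int_0^1 g(x,z)\,g(z,y)\,dz.
\]
On a bipodal graphon with parameters $(a,b,c,d)$ the right-hand side is again bipodal, so this reduces to three pointwise equations (one per block value) supplemented by the edge and triangle constraints and the gradient condition in the block size $c$. Matching against the ansatz $a = 1-e-\sigma$, $b = e$, $c = \sigma/(2e-1)$, $d = e+\sigma$ suggests rescaling $c = \sigma\tilde c$ together with expansions of the other parameters around their $\sigma = 0$ limits; in these coordinates the degenerate limit becomes regular and the reduced system can be recast as a fixed-point equation $\Phi_\sigma$ on a small ball in $\mathbb{R}^4$. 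The denominator $2e-1$ appearing in $\tilde c$ is bounded away from zero precisely when $e > 1/2$, which is what lets a Jacobian estimate show $\Phi_\sigma$ is a contraction for $\sigma$ sufficiently small. Iteration of $\Phi_\sigma$ then realizes the \emph{explicit iterative scheme} promised by the theorem, and the asymptotic series in $\sigma$ is generated by matching powers in the fixed-point equation order by order.

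The main obstacle is proving uniqueness over \emph{all} graphons, not merely among bipodal ones. I would argue in two stages. First, an a priori estimate: the bipodal candidate has entropy $S_{\text{bip}} = H(e) - C\sigma^2 + O(\sigma^3)$ for an explicit $C > 0$, while any graphon with $\edens(g) = e$ satisfies $S(g) \le H(e) - \tfrac{1}{2e(1-e)}\|g-e\|_{L^2}^2 + o(\|g-e\|_{L^2}^2)$ by concavity of $H$ together with the edge constraint $\int(g-e) = 0$, so any near-optimizer must satisfy $\|g-e\|_{L^2} = O(\sigma)$. Second, a structural argument: the Euler--Lagrange equation expresses $g(x,y)$ as a fixed sigmoidal function of $(g \ast g)(x,y)$, so $g$ takes as many values as $g \ast g$; bootstrapping this together with a careful analysis of the linearization around $g \equiv e$ (which for $e > 1/2$ is invertible on the relevant subspace, again courtesy of the factor $2e-1$) forces any critical point within the $O(\sigma)$ ball around $e$ to be bipodal. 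Combined with the bipodal uniqueness from the first paragraph, this yields $g_{e,t}$ as the unique global optimizer, and analyticity of $B(e,t)$ and of every subgraph density in $(e,t)$ follows from smoothness of $\Phi_\sigma$ via the implicit function theorem.
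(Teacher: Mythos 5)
You are proposing a proof for a statement that the present paper does not actually prove: Theorem~\ref{thm:b11} is quoted verbatim as ``Theorem 1 from \cite{NRS2}'' and is used here only as background (in particular in Section~\ref{sec:ordparam}, where the nonconstant degree function of its optimizer supplies the order parameter $Q$). There is therefore no proof in this source against which to compare your outline; the paper even remarks that the accompanying analyticity corollary ``was proven in the last paragraph of the proof of Theorem 1 in \cite{NRS2}.''

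On the merits of the sketch itself: the broad strategy --- perturb around $g\equiv e$, rescale $c=\sigma\tilde c$, close a contraction in the bipodal parameters, and read off the asymptotic series order by order --- is the right flavor, and your observation that the factor $2e-1$ is exactly what keeps the rescaled fixed-point problem nondegenerate for $e>1/2$ is on target. Two steps, however, need real work. Your a priori inequality $S(g)\le H(e)-\tfrac{1}{2e(1-e)}\|g-e\|_{L^2}^2+o(\|g-e\|_{L^2}^2)$ is not usable as stated, since the error term is only small once you already control $\|g-e\|_\infty$; what you want is the uniform quadratic bound $S(g)\le H(e)-2\|g-e\|_{L^2}^2$, which follows from $H''(u)\le -4$ on $(0,1)$ together with $\int (g-e)=0$, and which then yields $\|g-e\|_{L^2}=O(\sigma)$ unconditionally. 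More seriously, the sentence ``bootstrapping this together with a careful analysis of the linearization around $g\equiv e$ forces any critical point within the $O(\sigma)$ ball to be bipodal'' conceals essentially all of the difficulty. Knowing $g=(H')^{-1}(\alpha+3\beta\,(g*g))$ pointwise does not bound the number of values $g$ takes, because $g*g$ is not a priori finite-valued, and invertibility of the linearized operator on a subspace does not by itself produce a two-block structure. The cited works first establish that the optimizer is multipodal with a finite, bounded number of parts (the argument of \cite{KRRS1}) and then separately drive the part count down to two; neither step is a one-line bootstrap. Until that is filled in, the uniqueness-over-all-graphons claim, which is the heart of the theorem, remains unproven in your outline.
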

\begin{corollary}\ $B(e,t)$ and the densities of all subgraphs
are real analytic in $(e,t)$ in the open set $\calO_1$.\end{corollary}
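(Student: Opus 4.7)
The plan is to reduce the variational problem to a finite-dimensional analytic system and apply the implicit function theorem. By Theorem~\ref{thm:b11}, for each $(e,t)\in\calO_1$ the entropy maximizer is bipodal, hence determined by four parameters $(a,b,c,d)$ as in Figure~\ref{FIG:bipodal}, with $c\in(0,1)$ the block size, $a,d$ the on-block values, and $b$ the off-block value. On $(0,1)^4$, the edge and triangle density functions
\[
\edens = c^2 a + 2c(1-c)b + (1-c)^2 d,
\qquad
\tdens = c^3 a^3 + 3c^2(1-c)\,ab^2 + 3c(1-c)^2 b^2 d + (1-c)^3 d^3
\]
are polynomials, and the entropy $S = c^2 H(a) + 2c(1-c)H(b) + (1-c)^2 H(d)$ is real-analytic on this domain. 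So the maximization reduces to a finite-dimensional analytic optimization.

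Next I would write the Lagrangian
\[
L(a,b,c,d,\alpha,\beta) = S - \alpha(\edens - e) - \beta(\tdens - t),
\]
whose critical-point equations form a system of six analytic equations in the six unknowns $(a,b,c,d,\alpha,\beta)$, depending analytically on the parameters $(e,t)$. Provided the Jacobian of this system with respect to the unknowns is nonsingular at the optimizer, the analytic implicit function theorem yields real-analytic functions $(a,b,c,d)(e,t)$ and $(\alpha,\beta)(e,t)$ throughout $\calO_1$. Then $B(e,t) = S(a(e,t),b(e,t),c(e,t),d(e,t))$ is analytic as a composition of analytic functions, and for any fixed subgraph $G$ the density in the graphon is the polynomial in $(a,b,c,d)$ obtained by summing over all assignments of $V(G)$ to the two blocks the product of the corresponding edge-values, hence also analytic in $(e,t)$.

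The main obstacle is verifying non-singularity of the Jacobian, equivalently non-degeneracy of the bordered Hessian of $S$ on the constraint tangent plane. Uniqueness of the bipodal optimizer from Theorem~\ref{thm:b11} makes this plausible but does not by itself exclude an isolated degenerate critical point. I would handle this in two complementary ways. First, using the leading-order expansion~(\ref{approximation:b11-abcd}), one computes the bordered Hessian at leading order in $\sigma = (e^3-t)^{1/3}$ and verifies non-degeneracy by direct calculation; by continuity this extends to a neighborhood of the curve $t=e^3$. Second, one can exploit the iterative scheme in Theorem~\ref{thm:b11}: since that scheme is a contraction whose iteration step depends analytically on $(e,t)$, its fixed point is automatically an analytic function of $(e,t)$ on the open set where the contraction is strict, which we may take to be $\calO_1$. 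Either route delivers analyticity of $(a,b,c,d)$ in $(e,t)$, and the corollary follows at once.
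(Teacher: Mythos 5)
Your proposal follows the route the authors themselves take: they delegate this corollary to the last paragraph of the proof of Theorem 1 in \cite{NRS2}, which is precisely the finite-dimensional reduction plus analytic dependence of the bipodal parameters $(a,b,c,d)$ on $(e,t)$. The setup is correct — the density and entropy of a bipodal graphon are indeed real-analytic functions of $(a,b,c,d)$ on $(0,1)^4$, any fixed subgraph density is a polynomial in these parameters, and the six-equation Lagrange system is the right object to hand to the analytic implicit function theorem.

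The only point where your sketch needs to be tightened is the non-degeneracy verification, and there your two routes are not equally good. The first (compute the bordered Hessian at leading order in $\sigma$ and extend by continuity) only establishes non-degeneracy in a neighborhood of the curve $t=e^3$, whereas $\calO_1$ is an open set whose closure meets that curve but which need not be small; continuity alone does not propagate non-degeneracy to all of $\calO_1$. The second route is the clean one and is effectively what \cite{NRS2} uses: write the parameters as the fixed point $x = T_{e,t}(x)$ of the iterative scheme; since $T$ is jointly analytic and a contraction on $\calO_1$, $I - D_x T_{e,t}$ is automatically invertible, so the analytic implicit function theorem applied to $x - T_{e,t}(x)=0$ yields $(a,b,c,d)$ as analytic functions of $(e,t)$ throughout $\calO_1$, from which the analyticity of $B$ and of all subgraph densities follows by composition.
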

This corollary was proven in the last paragraph of the proof of
Theorem 1 in \cite{NRS2}, although not included in the statement of the theorem.

The bulk of this paper is devoted to proving Theorem \ref{main2},
which is tantamount to proving
Theorem \ref{main1}. 
To explain the steps, we need some more notation. 
We diagonalize $g(x,y) - e$, viewed as an integral operator, and write 
\begin{equation}\label{expand-g1} g(x,y) = e + \sum_{j=1}^\infty \lambda_j v_j(x) v_j(y), \end{equation}
where $|\lambda_1| \ge |\lambda_2| \ge \cdots$ and the functions $v_1, v_2, \cdots$ are orthonormal in $L^2([0,1])$. 
Let 
\begin{equation}\label{expand-g2} 
g_1(x,y) = \lambda_1 v_1(x) v_1(y), \qquad g_2(x,y) = \sum_{j=2}^\infty \lambda_j v_j(x) v_j(y). 
\end{equation}
Our goal is to show that $g_2 = 0$ and that $v_1(x)=\pm 1$, taking each value on a set of measure 1/2. We do this in stages:
\begin{itemize}
\item In Section \ref{sec:apriori}, we prove {\em a priori} entropy bounds on any graphon having the given values of 
$(e,t)$. We show that the symmetric bipodal graphon comes within $O(\delta^2)$ of saturating those bounds. This implies that 
any entropy-maximizing graphon must be $L^2$-close to a symmetric bipodal graphon. Specifically, 
$g_2$ must be $L^2$-small and $v_1$ must be $L^2$-close to the desired step function. 
\item In Section \ref{sec:pointwise} we show that $g_2$ is {\em pointwise} small and that $v_1(x)^2$ is pointwise close
to 1. More precisely, 
the $L^\infty$ norms of $g_2$ and $v_1^2-1$ must go to zero as $\delta \to 0$. 
\item In Section \ref{sec:cost-benefit} we expand the entropy $S(g)$ using a convergent Taylor series for $H(u)$ around $u=\frac12$. 
Using the fact that $g_2$ is pointwise small, we express the difference between $S(g)$ and the entropy of a symmetric bipodal graphon
as a quadratic function of the $L^2$ norm of $g_2$, the $L^2$-norm of $v_1(x)^2-1$ and the integral $\int_0^1 v_1(x) dx$, plus higher-order
corrections. We show that the quadratic function is negative-definite, implying that 
$g_2$ must be zero, $v_1(x)^2$ must be 1, and $\int_0^1 v_1(x) dx$ must be zero. In other words, our graphon must be symmetric bipodal. 
\item In Section \ref{sec:belowER} we turn our attention to Theorem \ref{main3}. We construct a family of tripodal graphons and we 
express the entropy of both this tripodal graphon and the symmetric bipodal graphon as power series in $\sigma$. When $e<e_0$, we can
choose the parameters of the tripodal graphon such that the tripodal graphon has more entropy at order $\sigma^2$ 
than the symmetric bipodal graphon. This does {\em not} prove that the optimal graphon is tripodal! However, it does proves that, 
for $\sigma$ sufficiently small, the symmetric bipodal graphon is not optimal.
\end{itemize}

We use big-O and little-o notation throughout. When we say that 
a certain quantity is $O(\delta^n)$, we mean that there exist positive
numbers $C$ and $\delta_0$ (which may depend on $\sigma$) 
such that our quantity is bounded by $C |\delta|^n$ whenever $|\delta|
< \delta_0$. When we say that a quantity is $o(\delta^n)$, we mean that
there exists a constant $\delta_0$ and function $f(\delta)$, going to zero
as $\delta \to 0$, such that the quantity is bounded by 
$f(\delta) |\delta^n|$ when $|\delta| < \delta_0$. 

\section{A priori estimates}\label{sec:apriori}

We begin with an upper bound on entropy. 
\begin{theorem}
If $g$ is a graphon with edge density $e=\frac12 + \delta$ and triangle density $e^3-\sigma^3$, 
with $\sigma>0$, then 
\begin{equation}\label{S-bound} S(g) \le H \left ( \frac12 + \sqrt{\delta^2 + \sigma^2}\right ). \end{equation}
\end{theorem}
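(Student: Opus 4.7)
The plan is to combine a spectral identity for the triangle density with a sharp Jensen-type bound for the entropy. Writing $h := g - e$, so that $\iint h = 0$, expand $h$ in its eigenbasis as a symmetric Hilbert--Schmidt kernel, $h(x,y) = \sum_j \lambda_j v_j(x)v_j(y)$. A direct expansion of $\tdens(g) = \iiint (e+h(x,y))(e+h(y,z))(e+h(z,x))$, in which the single-$h$ terms drop out because $\iint h = 0$, produces
\[
t \;=\; e^3 + 3e \int \bar h(y)^2\,dy + \sum_j \lambda_j^3, \qquad \bar h(y) := \int h(y,z)\,dz.
\]
Since $t = e^3 - \sigma^3$ with $\sigma > 0$ and $3e\int \bar h^2 \ge 0$, this forces $-\sum_j \lambda_j^3 \ge \sigma^3$. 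Combined with the elementary chain
\[
-\sum_j \lambda_j^3 \;\le\; \sum_j |\lambda_j|^3 \;\le\; \max_j|\lambda_j|\cdot\sum_j \lambda_j^2 \;\le\; \bigl(\sum_j \lambda_j^2\bigr)^{3/2},
\]
this yields the crucial $L^2$ lower bound $\|h\|_2^2 \ge \sigma^2$. Since $\iint h = 0$, the cross term vanishes in $\iint (g-\tfrac12)^2 = \|h\|_2^2 + \delta^2$, so $\iint (g-\tfrac12)^2 \ge \sigma^2 + \delta^2$.

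For the entropy, I exploit the symmetry $H(u) = H(1-u)$ to write $H(u) = \phi\bigl((u-\tfrac12)^2\bigr)$ where $\phi(s) := H(\tfrac12 + \sqrt s)$, $s \in [0,\tfrac14]$. The key analytic input is that $\phi$ is concave. This follows from the convergent Taylor expansion
\[
\ln 2 - H(\tfrac12 + x) \;=\; \sum_{k=1}^{\infty} \frac{(2x)^{2k}}{2k(2k-1)},
\]
obtained by termwise integration of $H'(u) = \ln((1-u)/u)$; its non-negative coefficients show that $\ln 2 - \phi(s)$ is a convex power series in $s$, so $\phi$ is concave on $[0,\tfrac14]$. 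Applying Jensen's inequality to $\phi$ over the uniform measure on $[0,1]^2$, with $(g-\tfrac12)^2 \in [0,\tfrac14]$ as the random variable, then yields
\[
S(g) \;=\; \iint \phi\bigl((g-\tfrac12)^2\bigr)\,dx\,dy \;\le\; \phi\!\left(\iint (g-\tfrac12)^2\,dx\,dy\right) \;=\; H\!\left(\tfrac12 + \sqrt{\|h\|_2^2 + \delta^2}\right).
\]

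To finish, $H(\tfrac12 + r)$ is monotone decreasing in $r \in [0,\tfrac12]$, and the elementary bound $\|h\|_2^2 \le \iint g - e^2 = e(1-e) = \tfrac14 - \delta^2$ keeps all arguments of $H$ in $[0,1]$. Combining with $\|h\|_2^2 \ge \sigma^2$ gives the desired $S(g) \le H(\tfrac12 + \sqrt{\delta^2+\sigma^2})$. The main obstacle in this plan is establishing the concavity of $\phi$: the naive quadratic bound $H(u) \le \ln 2 - 2(u-\tfrac12)^2$ would only yield $S(g) \le \ln 2 - 2(\delta^2+\sigma^2)$, which strictly exceeds $H(\tfrac12+\sqrt{\delta^2+\sigma^2})$ and is therefore too weak. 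The explicit Taylor series makes the concavity elementary, but it is the non-trivial ingredient that turns an $L^2$ estimate into the sharp bound (\ref{S-bound}).
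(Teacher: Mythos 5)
Your proposal is correct and follows essentially the same route as the paper: the same spectral decomposition of $g-e$, the same triangle-density identity giving the $L^2$ lower bound $\|g-e\|_2^2\ge\sigma^2$, and then a Jensen-type argument on the Taylor series of $H$ about $1/2$ to pass from the $L^2$ estimate to the entropy bound. The only difference is packaging: where the paper argues moment by moment that each $\mu_{2k}$ with $k>1$ is minimized (for fixed $\mu_2$) when the integrand $(g-\tfrac12)^2$ is constant, you bundle those moment inequalities into a single concavity statement for $\phi(s)=H(\tfrac12+\sqrt{s})$ and apply Jensen once, which is a slightly cleaner and fully explicit form of the same inequality.
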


\begin{proof}
Let $g$ be our arbitrary graphon, which we expand as in equations (\ref{expand-g1}) and (\ref{expand-g2}). For $i=1,2$, let 
$d_i(x) = \int_0^1 g_i(x,y) dy$. A direct computation of the triangle density gives 
\begin{eqnarray}\label{t-expand}
\tdens(g) & = & \iiint g(x,y) g(y,z) g(z,x) \, dx \, dy \, dz \cr 
 & = & e^3 + \sum_{j=1}^\infty \lambda_j^3 + 3e \int_0^1 (d_1(x)^2 + d_2(x)^2) dx \cr 
 \sum_j \lambda_j^3 & = & -(\sigma^3 + 3e \int_0^1 (d_1(x)^2 + d_2(x)^2) dx).
\end{eqnarray}
The squared $L^2$ norm of $g_1+g_2$ is
\[ \sum_j \lambda_j^2 \ge \left | \sum_j \lambda_j^3 \right |^{2/3} \ge \sigma^2, \]
with equality if and only if $\lambda_1=-\sigma$, $\lambda_2= \lambda_3 = \cdots = 0$, and $d_1(x)=0$ for all $x$.  

Next we maximize the entropy for a fixed $\| g_1+g_2 \|_2^2$. We use an absolutely convergent
power series for 
\begin{equation} H(u) = - (u \ln(u) + (1-u)\ln(1-u)), \end{equation}
namely 
\begin{equation}
H(u) = \sum_{n=0}^\infty \frac{H^{(n)}(\frac12)}{n!} \left (u-\frac12 \right )^n. 
\end{equation}
The terms with $n$ odd are identically zero, while the terms with $n$ 
nonzero and even are negative. As a result, 
\begin{equation}\label{S-expand}
S(g) = H\left (\frac12 \right ) + \sum_{k=1}^\infty \frac{H^{(2k)}(\frac12)}{(2k)!} \mu_{2k}, 
\end{equation}
where
\begin{equation}\label{moments} 
\mu_{2k} = \iint \left (\delta + g_1(x,y) + g_2(x,y) \right )^{2k} \, dx \, dy.
\end{equation}

The second moment depends only on the size of $g_1$ and $g_2$.
Since $\iint g_1(x,y)+g_2(x,y) \, dx \, dy =0$, 
there are no cross terms between $\delta$ and $g_1+g_2$, leaving us with 
\[ \mu_2 = \delta^2 + \| g_1 + g_2 \|_2^2 = \delta^2 
+ \sum_{j=1}^\infty \lambda_j^2. \]
Maximizing $S(g)$ is equivalent to minimizing all of the higher moments
$\mu_{2k}$ with $k>1$. This happens when 
$(\delta + g_1(x,y)+g_2(x,y) )^2$ is constant, equaling $\mu_2$. In that case,
$g$ is everywhere equal to $\frac12 \pm \sqrt{\mu_2}$ and 
\begin{equation} 
S(g) = H\left (\frac12 + \sqrt{\delta^2 + \| g_1+g_2 \|_2^2} \right ).
\end{equation}
Since $\| g_1+g_2 \|_2^2 \ge \sigma^2$, and since $H(u)$ is a 
decreasing function of $u$ for $u > 1/2$, we conclude that
$$S(g) \le H \left ( \frac12 + \sqrt{\delta^2 + \sigma^2} \right ). $$
\end{proof}

\begin{corollary}\label{cor:alpha_small} If $g$ is an entropy-maximizing graphon, then $\int_0^1 v_1(x) \, dx$ is $O(\delta)$, while 
$\| g_2 \|_2^2$, $\int_0^1 d_2(x)^2 \, dx$ and 
$\int_0^1 (v_1(x)^2-1)^2\, dx$ are $O(\delta^2)$. 
\end{corollary}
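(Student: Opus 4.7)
The plan is to sandwich $S(g)$ between the entropy of the symmetric bipodal graphon (as a lower bound) and the upper bound supplied by the preceding theorem, then to extract each of the four estimates from the squeeze and from the structure of the entropy expansion.

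First, since the symmetric bipodal graphon with parameters $(e,\sigma)$ lies in $\calG_{e,t}$ and $g$ is entropy-maximizing, one has $S(g)\ge S_{\mathrm{bip}} = \tfrac12(H(e-\sigma)+H(e+\sigma))$. Taylor-expanding in $\delta$ and using $H(\tfrac12-x)=H(\tfrac12+x)$ gives $S_{\mathrm{bip}} = H(\tfrac12+\sigma) + \tfrac{\delta^2}{2}H''(\tfrac12+\sigma) + O(\delta^3)$. The proof of the preceding theorem in fact supplies the sharper upper bound $S(g)\le H(\tfrac12+\sqrt{\mu_2})$ with $\mu_2 = \delta^2 + \sum_j\lambda_j^2$. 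Taylor-expanding the right-hand side around $\sqrt{\mu_2}=\sigma$, using $H'(\tfrac12+\sigma)<0$ and $\sqrt{\mu_2}\ge\sigma$, comparison with $S_{\mathrm{bip}}$ forces $\sqrt{\mu_2}-\sigma = O(\delta^2)$ and hence $\sum_j\lambda_j^2 = \sigma^2+O(\delta^2)$.

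Next, combining this with the H\"older-type inequality $|\sum_j\lambda_j^3|\le|\lambda_1|\sum_j\lambda_j^2$ and the identity $|\sum_j\lambda_j^3| = \sigma^3 + 3e\xi\ge\sigma^3$ (where $\xi = \int_0^1(d_1^2+d_2^2)\,dx$) from the proof of the theorem yields $|\lambda_1|\ge\sigma-O(\delta^2)$, so $\lambda_1^2 = \sigma^2 + O(\delta^2)$ and $\|g_2\|_2^2 = \sum_{j\ge 2}\lambda_j^2 = O(\delta^2)$. Writing $d_2(x) = \int_0^1 g_2(x,y)\,dy = \sum_{j\ge 2}\lambda_j v_j(x)\int v_j$, orthonormality gives $\int_0^1 d_2^2 = \sum_{j\ge 2}\lambda_j^2(\int v_j)^2 \le \|g_2\|_2^2 = O(\delta^2)$. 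For $\alpha := \int_0^1 v_1$, observe that $|\sum_{j\ge 2}\lambda_j^3|\le|\lambda_1|\|g_2\|_2^2 = O(\delta^2)$ and $\lambda_1^3 = -\sigma^3 + O(\delta^2)$ (the sign determined because $\sum_j\lambda_j^3<0$), which together with $\sum_j\lambda_j^3 = -\sigma^3-3e\xi$ force $\xi = O(\delta^2)$; since $\int_0^1 d_1^2 = \lambda_1^2\alpha^2 \le\xi$, dividing by $\lambda_1^2\ge\sigma^2/2$ yields $\alpha^2 = O(\delta^2)$.

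The hardest bound, $\int_0^1 (v_1^2-1)^2 = O(\delta^2)$, will use the fourth moment. From $S(g) = H(\tfrac12)+\sum_{k\ge1}\frac{H^{(2k)}(\tfrac12)}{(2k)!}\mu_{2k}$ and the series $H(\tfrac12+\sqrt{\mu_2}) = H(\tfrac12)+\sum_{k\ge1}\frac{H^{(2k)}(\tfrac12)}{(2k)!}\mu_2^k$, each term of the gap $\sum_{k\ge 1}\frac{|H^{(2k)}(\tfrac12)|}{(2k)!}(\mu_{2k}-\mu_2^k)$ is non-negative, so in particular $\tfrac{4}{3}(\mu_4-\mu_2^2)\le H(\tfrac12+\sqrt{\mu_2})-S(g) = O(\delta^2)$. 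Since $\mu_4-\mu_2^2 = \|(\delta+q)^2-\mu_2\|_2^2$ with $q=g-e$, we get $\|q^2-\sigma^2\|_2 = O(\delta)$ after absorbing $\|2\delta q\|_2 = O(\delta)$. Splitting $q = \lambda_1 v_1(x)v_1(y) + g_2$ and using the pointwise bound $|v_1|\le\|q\|_\infty/|\lambda_1|\le 2/\sigma$ obtained from the eigenvalue equation $\lambda_1 v_1(x) = \int_0^1 q(x,y)v_1(y)\,dy$ (valid for small $\delta$ since $|\lambda_1|\ge\sigma/2$), the cross term $2\lambda_1 v_1(x)v_1(y)g_2$ and the quadratic $g_2^2$ each have $L^2$ norm $O(\delta)$, so $\|\lambda_1^2 v_1(x)^2v_1(y)^2 - \sigma^2\|_2 = O(\delta)$. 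Absorbing $\lambda_1^2-\sigma^2 = O(\delta^2)$ gives $\|v_1(x)^2v_1(y)^2-1\|_2 = O(\delta)$, and the identity $\iint(v_1(x)^2v_1(y)^2-1)^2\,dx\,dy = (\int v_1^4)^2 - 1$ (using $\int v_1^2 = 1$) yields $\int v_1^4 = 1 + O(\delta^2)$, whence $\int(v_1^2-1)^2 = \int v_1^4 - 1 = O(\delta^2)$.

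The principal obstacle will be the fourth-moment argument for $\int(v_1^2-1)^2$: extracting a quantitative statement about $v_1^2-1$ from the variance bound $\mu_4-\mu_2^2 = O(\delta^2)$ requires isolating $\lambda_1^2 v_1(x)^2v_1(y)^2$ as the dominant part of $q^2$, which rests on the pointwise $L^\infty$ bound on $v_1$ derived from the eigenvalue equation together with $|\lambda_1|\ge\sigma/2$.
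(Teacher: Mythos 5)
Your proof is correct and follows essentially the same strategy as the paper's: squeeze $S(g)$ between the symmetric bipodal entropy and the a priori bound, extract $\mu_2 = \sigma^2 + O(\delta^2)$ and $\mu_4 - \mu_2^2 = O(\delta^2)$, and then unwind the triangle identity to control $d_1, d_2, g_2$ and $v_1$.

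The one place where you genuinely do more than the paper is the fourth-moment step. The paper argues by inspecting only the ``leading contribution'' $\lambda_1^4\bigl(\int v_1^4\bigr)^2$ to $\mu_4$ and asserting it must be within $O(\delta^2)$ of $\sigma^4$; this leaves implicit the verification that the remaining contributions to $\mu_4$ (e.g.\ the $4\delta\iint(g_1+g_2)^3$ cross terms) cannot conspire to cancel a large positive $\lambda_1^4\bigl(\int v_1^4\bigr)^2 - \sigma^4$. You avoid this by working directly with the variance $\mu_4 - \mu_2^2 = \|(\delta+q)^2 - \mu_2\|_2^2$, peeling off the $O(\delta)$ pieces ($2\delta q$, the $g_1 g_2$ cross term, and $g_2^2$) with explicit $L^\infty$ control on $v_1$ from the eigenvalue equation, and reducing to the clean identity $\iint\bigl(v_1(x)^2 v_1(y)^2-1\bigr)^2 = \bigl(\int v_1^4\bigr)^2 - 1$ combined with Jensen's inequality $\int v_1^4 \ge 1$. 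This is a tighter version of the same idea, not a different route. The earlier steps also diverge cosmetically — you bound $|\sum\lambda_j^3| \le |\lambda_1|\sum\lambda_j^2$ and squeeze $\lambda_1^2$, whereas the paper uses $\sum\lambda_j^2 \ge |\sum\lambda_j^3|^{2/3}$ and then a separate $\lambda_1 \le -\sigma + O(\|g_2\|_2^3)$ bound — but both routes yield the same intermediate estimates with comparable effort.
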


\begin{proof} The symmetric bipodal graphon has entropy 
\begin{eqnarray*}
\frac12 \left [H(e+\sigma) + H(e-\sigma) \right ] & = & \frac12 \left [ H\left (\frac12 + \delta + \sigma\right ) + 
H\left (\frac12 + \delta - \sigma \right ) \right ] \cr 
& = & \frac12 \left [ H\left (\frac12 + \sigma + \delta \right ) + 
H\left (\frac12 + \sigma -\delta \right ) \right ]  \cr 
& = & H \left ( \frac12 + \sigma\right ) + \frac{\delta^2}{2} H''\left ( \frac12 + \sigma \right ) + O(\delta^4).
\end{eqnarray*}
By contrast, the upper bound (\ref{S-bound}) is 
\[ H \left ( \frac12 + \sigma \right ) + \frac{\delta^2}{2\sigma} H'\left ( \frac12 + \sigma \right ) + O(\delta^4). \]

Since the symmetric bipodal graphon comes within $O(\delta^2)$ 
of achieving the upper bound (\ref{S-bound}), the entropy-maximizing 
graphon must also come within $O(\delta^2)$ of that bound. In particular,
$\|g_1 + g_2\|_2^2$ must be within $O(\delta^2)$ of $\sigma^2$ and the 
fourth moment $\mu_4$ can be no more than $O(\delta^2)$ greater
than $(\delta^2 + \sigma^2)^2=\sigma^4 + O(\delta^2)$. 

Now 
\[ \|g_1 + g_2 \|_2^2 = \sum_j \lambda_j^2 \ge \left | \sum_j \lambda_j^3
\right |^{2/3} = \left (\sigma^3 + 3e \int_0^1 d_1(x)^2 + d_2(x)^2 dx
\right )^{2/3}.\]
This can only be within $O(\delta^2)$ of $\sigma^2$ if 
$\int_0^1 d_1(x)^2 dx$ and $\int_0^1 d_2(x)^2 dx$ are both $O(\delta^2)$.
However, $\int_0^1 d_1(x)^2 dx = \lambda_1^2 \left (\int_0^1 v_1(x) dx
\right )^2$ and $\lambda_1 \approx -\sigma$, so $\int_0^1 v_1(x) dx$ must be $O(\delta)$. 

We now turn to $\|g_2\|_2^2 = \sum_{j=2}^\infty \lambda_j^2$. Since 
\[ \lambda_1^3 = -(\sigma^3 + 3e \int_0^1 d_1(x)^2+d_2(x)^2 \, dx  
- \sum_{j=2}^\infty \lambda_j^3), \]
and since $| \sum_{j=2}^\infty \lambda_j^3 | \le \|g_2\|_2^3$,
$\lambda_1 \le - \sigma + O(\|g_2\|_2^3)$. But then 
\[ \sum_{j=1}^\infty \lambda_j^2 \ge \sigma^2 + \|g_2\|_2^2 
+ O(\|g_2\|_2^3). \]
Since this must be within $O(\delta^2)$ of $\sigma^2$, we must have 
$\|g_2\|_2^2 = O(\delta^2)$. 

Finally, we consider the fourth moment $\mu_4$. The leading contribution
is 
\[ \lambda_1^4 \left ( \int_0^1 v_1(x)^4 dx \right )^2 
= \lambda_1^4 \left (1 + \int_0^1 (v_1(x)^2-1)^2 dx \right )^2. \] 
For this to be within $O(\delta^2)$ of $\sigma^4$,  
$\int_0^1 (v_1(x)^2-1)^2 dx$ must be $O(\delta^2)$. 
\end{proof} 

\section{Pointwise estimates}\label{sec:pointwise}
The upshot of Section \ref{sec:apriori} is that $g$ must be $L^2$-close to a symmetric bipodal graphon, with $\lambda_1$ being close to $-\sigma$, with the sum of the 
other $\lambda_j^2$ being small, and with $v_1(x)$ being close to 1 on a set of measure approximately 1/2 and close to $-1$ on a set of
measure approximately 1/2. In this section we upgrade those $L^2$ 
estimates into pointwise estimates:

\begin{proposition}\label{prop-g2-small}
If $g$ is an entropy-maximizing graphon, then $\|g_2(x,y)\|_\infty$ 
is $o(1)$. 
\end{proposition}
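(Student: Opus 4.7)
The plan is to combine the Euler--Lagrange equation for an interior entropy maximizer with the $L^2$ a priori bounds of Corollary~\ref{cor:alpha_small}, then upgrade from $L^2$ smallness of $g_2$ to $L^\infty$ smallness by a compactness/contradiction argument. Varying $S$ against the two constraints yields (a.e.)
\[ g(x,y) = F\bigl(\alpha + \beta T(x,y)\bigr), \qquad T(x,y) := \int_0^1 g(x,z)g(z,y)\,dz, \]
where $F(u)=1/(1+e^u)$ is the logistic function (smooth, $\tfrac14$-Lipschitz) and $\alpha,\beta$ are Lagrange multipliers. Pointwise control of $g$ thus reduces to pointwise control of the triangle kernel $T$.

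Assuming the proposition fails, I would extract a sequence $\delta_n\to 0$ with entropy-maximizing graphons $g^{(n)}$ satisfying $\|g_2^{(n)}\|_\infty\ge c>0$. Corollary~\ref{cor:alpha_small} gives $\|g_2^{(n)}\|_2\to 0$; by compactness of graphon space in the cut metric, after passing to a subsequence and applying measure-preserving transformations I may assume $g^{(n)}\to g^*$ in $L^2$. The limit has $(e^*,t^*)=(1/2,\,1/8-\sigma^3)$, and the known uniqueness at $e=1/2$ from \cite{RS2} identifies $g^*$ as the symmetric bipodal graphon, with $\lambda_1^*=-\sigma$ and $g_2^*=0$. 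The Lagrange multipliers also converge, $\alpha^{(n)}\to\alpha^*$ and $\beta^{(n)}\to\beta^*$, since they can be extracted as continuous functionals of $g$ from suitable moments of the Euler--Lagrange identity.

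The key step is to upgrade $g^{(n)}\to g^*$ from $L^2$ to $L^\infty$. Writing $h_n := g^{(n)} - g^*$ and $\epsilon_n := |\alpha^{(n)} - \alpha^*| + |\beta^{(n)} - \beta^*|$, Lipschitz continuity of $F$ gives
\[ |h_n|(x,y) \le L\bigl(\epsilon_n + \beta^{(n)}|T^{(n)} - T^*|(x,y)\bigr),\]
while expanding $T$ and using $g^{(n)}, g^* \in [0,1]$ yields
\[ |T^{(n)} - T^*|(x,y) \le \int_0^1 |h_n|(x,z)\,dz + \int_0^1 |h_n|(z,y)\,dz. \]
Combining these and iterating---at each stage averaging against another copy of the bounded kernel $g^{(n)}$ to gain integrability---should turn the $L^1$ smallness $\|h_n\|_1 \le \|h_n\|_2 \to 0$ into the desired $\|h_n\|_\infty \to 0$. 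Once $\|g^{(n)} - g^*\|_\infty \to 0$, the eigenvalue equation $\lambda_1^{(n)} v_1^{(n)}(x) = \int (g^{(n)} - e^{(n)})(x,z) v_1^{(n)}(z)\,dz$ combined with $L^2$-convergence $v_1^{(n)} \to v_1^*$ (via spectral perturbation, using simplicity of $\lambda_1^*$) forces $\|v_1^{(n)} - v_1^*\|_\infty \to 0$ up to sign, hence $\|g_1^{(n)} - g_1^*\|_\infty \to 0$ and
\[ \|g_2^{(n)}\|_\infty \le \|h_n\|_\infty + \|g_1^{(n)} - g_1^*\|_\infty + |e^{(n)} - e^*| \to 0, \]
contradicting the assumption.

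The main obstacle will be the bootstrap. A direct one-step Gr\"onwall-type contraction on $U_n(x) := \int |h_n|(x,z)\,dz$ closes only when $L\beta^{(n)} < 1$, but $\beta^* = \sigma^{-2} \ln\bigl((1+2\sigma)/(1-2\sigma)\bigr)$ is comfortably larger than $4$ throughout the regime of interest, so the naive contraction fails. The iteration must therefore be carried out carefully, at each step trading smoothness from an additional convolution against the uniformly bounded kernel $g^{(n)}$ for improved integrability, before one can finally reach $L^\infty$.
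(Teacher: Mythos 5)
Your compactness framework is reasonable, and the closing step (from $\|g^{(n)}-g^*\|_\infty\to 0$ to $\|g_2^{(n)}\|_\infty\to 0$ via spectral perturbation of the simple eigenvalue $\lambda_1$) is sound. But the core $L^2\to L^\infty$ bootstrap has a gap that you correctly identify and then leave unresolved, and I do not see how to fix it along the lines you sketch. Writing $U_n(x):=\int_0^1|h_n|(x,z)\,dz$, the Euler--Lagrange bound gives
\[ |h_n|(x,y)\le L\bigl(\epsilon_n+\beta^{(n)}(U_n(x)+U_n(y))\bigr), \]
and integrating in $y$ yields $(1-L\beta^{(n)})\,U_n(x)\le L\epsilon_n+L\beta^{(n)}\|U_n\|_1$, which is vacuous when $L\beta^{(n)}\ge 1$ (and indeed $\beta^*$ is large). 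An additional convolution against the bounded kernel $g^{(n)}$ gains continuity, not a smaller contraction constant: the obstruction is the size of $L\beta^{(n)}$, not regularity, so ``trading smoothness for integrability'' does not make the iteration close.

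The paper avoids the contraction entirely and instead uses structural information from the spectral decomposition that your argument never touches. Lemma~\ref{bound-g1} first obtains pointwise bounds on $v_1$, $g_1$, $g_2$ \emph{separately}, by combining $0\le g\le 1$ with the row-wise orthogonality $\int_0^1 g_1(x_0,y)g_2(x_0,y)\,dy=0$. Lemma~\ref{g-mostly-small} then fixes $x$ and exploits $\int_0^1 g_2(x,y)v_1(y)\,dy=0$: together with the Euler--Lagrange equation and the fact that $H'(g)=\mu+\nu g$ has at most three roots, this forces $g_2(x,\cdot)$ to be nearly constant (away from a small exceptional set in $y$), hence $g_2(x,\cdot)\approx d_2(x)$; a mean-value-theorem sign comparison in $H'$ then forces $d_2(x)\approx 0$. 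Only after that does the proof of Proposition~\ref{prop-g2-small} combine boundedness and ``small on most of each row'' to conclude $\|g_2\|_\infty=o(1)$. The row-wise orthogonality of $g_2(x,\cdot)$ to $v_1$, and the rigidity of the transcendental equation $H'(g)=\mu+\nu g$, are the ingredients missing from your proposal; a generic fixed-point estimate for $g\mapsto F(\alpha+\beta T[g])$ does not see them, which is precisely why it fails to close.
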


\begin{proposition}\label{prop-g1-small}
If $g$ is an entropy-maximizing graphon, then $\| v_1(x)^2-1\|_\infty$
is $o(1)$.
\end{proposition}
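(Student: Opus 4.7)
The plan is to exploit the Euler--Lagrange equation from entropy maximization together with the $L^2$ bounds of Section~\ref{sec:apriori} and Proposition~\ref{prop-g2-small}. At an entropy-maximizing graphon there are Lagrange multipliers $\alpha,\beta$ (depending on $(e,t)$) for which
\begin{equation*}
g(x,y)=\psi(P(x,y)),\qquad P(x,y):=\int_0^1 g(x,z)g(z,y)\,dz,
\end{equation*}
where $\psi:=(H')^{-1}$ is the (decreasing) logistic sigmoid $p\mapsto(1+e^{\alpha+3\beta p})^{-1}$. The goal is to turn this pointwise identity into an equation for the product $v_1(x)v_1(y)$ whose solution set is discrete.

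Expanding $P$ via the spectral decomposition (\ref{expand-g1}) and the orthonormality of the $v_j$'s gives
\begin{equation*}
P(x,y)=e^2+\lambda_1^2\,v_1(x)v_1(y)+R(x,y),
\end{equation*}
where $R$ collects the contributions involving $\alpha_j=\int v_j$, $d_2(x)=\int g_2(x,y)\,dy$, and the tail kernel $\int g_2(x,z)g_2(z,y)\,dz$. Corollary~\ref{cor:alpha_small} and Proposition~\ref{prop-g2-small}, combined with the elementary bounds $\|d_2\|_\infty\le\|g_2\|_\infty$ and $\bigl|\int g_2(x,z)g_2(z,y)\,dz\bigr|\le\|g_2\|_\infty^2$, yield $\|R\|_\infty=o(1)$. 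Since $\psi$ is Lipschitz on bounded intervals, this produces the uniform approximation $g(x,y)=\psi(e^2+\lambda_1^2 v_1(x)v_1(y))+o(1)$; comparing with the spectral identity $g=e+\lambda_1 v_1(x)v_1(y)+g_2$ and using $\|g_2\|_\infty=o(1)$ shows that
\begin{equation*}
\Phi(r):=\psi(e^2+\lambda_1^2 r)-e-\lambda_1 r
\end{equation*}
satisfies $\Phi(v_1(x)v_1(y))=o(1)$ uniformly in $(x,y)\in[0,1]^2$.

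The bipodal Euler--Lagrange conditions give $\Phi(\pm1)=0$; a direct calculus analysis of $\Phi$ using the explicit logistic form of $\psi$ (with $\beta$ fixed by $\psi(e^2\pm\sigma^2)=e\mp\sigma$) shows that the only other zero in a fixed neighborhood of $[-1,1]$ is a simple root $r^\ast=O(\delta)$ near the origin, and that $\Phi$ is bounded away from zero off $o(1)$-neighborhoods of $\{-1,r^\ast,1\}$. Choosing any $y_0$ with $|v_1(y_0)|$ close to $1$ (such $y_0$'s form a set of nearly full measure by the $L^2$ estimates), the resulting restriction $v_1(x)v_1(y_0)\in\{-1,r^\ast,1\}+o(1)$ confines the essential range of $v_1$ to $o(1)$-neighborhoods of $\{-1,r^\ast,1\}$. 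Writing the corresponding block masses as $p_-,p_0,p_+$ and inserting into the exact edge-density identity $\iint g=e$ produces, after cancellation of the symmetric $\pm1$ contributions, a relation of the form $p_0\,\sigma\,r^\ast\bigl[2(1-p_0)+p_0 r^\ast\bigr]=o(1)$, and combining with the analogous relation from the triangle constraint $\iiint g(x,y)g(y,z)g(z,x)=e^3-\sigma^3$ forces $p_0=0$ strictly, giving $v_1(x)^2\to1$ uniformly. The main obstacle is this last bootstrap: extracting $p_0=0$ exactly rather than merely $p_0=o(1)$ requires tracking the fluctuations of $v_1$ and $g_2$ around the nominal three-block structure precisely enough that the edge and triangle identities kill the middle block entirely, since any positive-measure middle block would contribute an $O(1)$ term to $\|v_1^2-1\|_\infty$ and thereby defeat the claimed uniform bound.
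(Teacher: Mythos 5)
Your reduction to the Euler--Lagrange identity $g=\psi(P)$, the spectral expansion of $P$, the estimate $\|R\|_\infty=o(1)$, and the resulting confinement of the essential range of $v_1$ to $o(1)$-neighborhoods of three roots $\{-1,r^\ast,1\}$ (with $r^\ast=O(\delta)$) are all sound and track the paper's argument closely --- in the paper's language this is exactly the step that rules out the set $I_4$ where $v_1$ takes ``other'' values.

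The gap is in the final bootstrap, and you have correctly identified it as the hard part but proposed a mechanism that cannot work. You hope to force $p_0=0$ from the exact edge and triangle identities. But those identities are \emph{constraints}, satisfied by every admissible graphon, and in particular are satisfied (exactly) by plenty of three-block graphons with a positive-measure middle block at $v_1\approx r^\ast$; a tripodal configuration can match $(e,t)$ precisely by adjusting $\lambda_1$, $r^\ast$, and the block masses. Concretely, the relation you extract has the form $p_0\,\sigma\,r^\ast[\,\cdots\,]=o(1)$, and since $r^\ast=O(\delta)=o(1)$ this is \emph{automatically} $o(1)$ for every $p_0\in[0,1]$ and conveys no information about $p_0$. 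The $L^2$ normalization $\int v_1^2=1$ likewise only gives $p_0(1-{r^\ast}^2)=O(\varepsilon)$ where $\varepsilon$ is the pointwise fluctuation scale, i.e.\ $p_0=o(1)$, which is not enough: any $p_0>0$, however small, forces $\|v_1^2-1\|_\infty\approx 1$.

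What is actually needed to kill the middle block is an \emph{optimality} condition, not a feasibility condition. The pointwise Euler--Lagrange equation (\ref{EL-general}) is genuinely satisfied on $I_3$, so a different class of variations is required: one must perturb the \emph{boundary} between blocks, moving a set of measure $\epsilon$ from $I_3$ into $I_1$ (changing $v_1$ there from $\approx r^\ast$ to $\approx 1$), and compute the first-order changes in $S$, $\edens$, and $\tdens$. Stationarity under this variation gives the scalar identity
\begin{equation*}
2\Bigl[H\Bigl(\tfrac12+\sigma\Bigr)-H\Bigl(\tfrac12\Bigr)\Bigr]=\sigma\,H'\Bigl(\tfrac12+\sigma\Bigr),
\end{equation*}
and comparing the power-series coefficients in $\sigma$ on each side (using that the even derivatives $H^{(2k)}(1/2)$ are all negative) shows the right-hand side is strictly smaller for every $\sigma\in(0,1/2)$. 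Thus the identity fails, the interior stationarity assumption is untenable, and $I_3$ must have measure zero. Your proof does not contain this variational step and cannot be completed along the route you sketch.
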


We prove Propositions \ref{prop-g2-small} and \ref{prop-g1-small}
with a series of lemmas. 
We begin by showing that $g_1$ and $g_2$ are pointwise bounded. 
\begin{lemma}\label{bound-g1} Let $g$ be an entropy-maximizing graphon.
For all $x, y \in [0,1]$, the following bounds apply: 
\[ |v_1(x)| \le |\lambda_1|^{-1}, \qquad 
|g_1(x,y)| \le |\lambda_1|^{-1}, \qquad |g_2(x,y)| \le 1 + 
|\lambda_1|^{-1}. \]
\end{lemma}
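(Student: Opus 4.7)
All three bounds come from the standard observation that an $L^2$-normalized eigenfunction of a uniformly bounded symmetric kernel is pointwise controlled by the operator norm of the kernel. The decomposition (\ref{expand-g1}) is precisely the spectral decomposition of the self-adjoint Hilbert--Schmidt operator on $L^2([0,1])$ with kernel $g(x,y)-e$, so $v_1$ satisfies the eigenvalue identity
\begin{equation*}
\lambda_1 v_1(x) \;=\; \int_0^1 \bigl(g(x,y)-e\bigr)\,v_1(y)\,dy.
\end{equation*}
First I would pick the representative of $v_1$ for which this identity holds for every $x\in[0,1]$ (the right-hand side is a genuine function of $x$, not just an $L^2$ equivalence class). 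Then Cauchy--Schwarz together with $\|v_1\|_2=1$ gives $|\lambda_1|\,|v_1(x)| \le \|g(x,\cdot)-e\|_2$. Since $0\le g\le 1$ forces $|g(x,y)-e|\le \max(e,1-e)\le 1$ pointwise, the right-hand side is at most $1$, which yields the first bound $|v_1(x)|\le|\lambda_1|^{-1}$.

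The remaining two bounds are one-line consequences. From $g_1(x,y)=\lambda_1 v_1(x) v_1(y)$, multiplying the pointwise bound on $|v_1|$ twice gives $|g_1(x,y)|\le |\lambda_1|\cdot|\lambda_1|^{-1}\cdot|\lambda_1|^{-1}=|\lambda_1|^{-1}$; and from $g_2=g-e-g_1$ the triangle inequality together with $|g-e|\le 1$ gives $|g_2(x,y)|\le 1+|\lambda_1|^{-1}$. There is no real obstacle here; the only mild subtlety is choosing pointwise representatives of $v_1$, $g_1$ and $g_2$ so that the three inequalities hold at every $(x,y)\in[0,1]^2$ rather than merely almost everywhere. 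The reason this lemma will be useful input to the subsequent bootstrap is that, under the entropy-maximizing hypothesis, Corollary~\ref{cor:alpha_small} forces $|\lambda_1|$ to be close to $\sigma$, so for $\delta$ small $|\lambda_1|^{-1}$ is bounded uniformly in terms of $\sigma$ alone.
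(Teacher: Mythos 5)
Your proof is correct, and it takes a genuinely different route from the paper's. The paper argues by contradiction: it supposes some point $x_0$ has $v_1(x_0)>1/|\lambda_1|$, invokes the earlier $L^2$ estimates (Corollary~\ref{cor:alpha_small}) to conclude that $v_1$ is close to $\pm 1$ on two sets $I_\pm$ of measure near $1/2$, and then observes that for all $y\in I_\pm$ the product $g_1(x_0,y)\,g_2(x_0,y)$ would have to be bounded away from zero with a fixed negative sign (because $e+g_1(x_0,y)$ would lie outside $[0,1]$ and $g_2$ must compensate), contradicting the orthogonality identity $\int_0^1 g_1(x_0,y)\,g_2(x_0,y)\,dy=0$. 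Your argument instead applies Cauchy--Schwarz to the eigenvalue identity $\lambda_1 v_1(x)=\int_0^1\bigl(g(x,y)-e\bigr)v_1(y)\,dy$, getting $|\lambda_1|\,|v_1(x)|\le \|g(x,\cdot)-e\|_2 \le \max(e,1-e)\le 1$ directly, and the other two bounds follow immediately from $g_1=\lambda_1 v_1\otimes v_1$ and $g_2=g-e-g_1$. Your route is shorter and strictly more general: it uses only $0\le g\le 1$ and the spectral decomposition, with no appeal to the entropy-maximizing hypothesis or to Corollary~\ref{cor:alpha_small}, so the same bounds hold for \emph{every} graphon, not just entropy maximizers. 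The only care needed, which you correctly flag, is to fix representatives so the eigenvalue identity and hence the bounds hold pointwise rather than merely a.e.; this is routine since the right-hand side of the eigenvalue identity is a genuine function of $x$.
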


\begin{proof} We use the fact that 
\[ 0 \le e + g_1(x,y) + g_2(x,y) \le 1 \]
for all $(x,y)$. The only way for $g_2(x,y)$ be be big and positive 
(resp.~negative) is for $g_1(x,y)$ to be big and negative
(resp.~positive). This can only occur if $|v_1(x)|$ is large for some 
$x$. 

Suppose that there is a point $x_0$ with $v_1(x_0) > 1/|\lambda_1|$.
Let $I_+$ be the set of $x$ for which $v_1(x)>0.9 |\lambda_1 v_1(x_0)|^{-1}$ and let $I_-$ be the 
set of $x$ for which $v_1(x) < -0.9|\lambda_1 v_1(x_0)|^{-1}$. We already know that the set of points with $v_1(x)$ 
close to $\pm 1$ each have measure close to 1/2, since $\int_0^1 (v_1(x)^2-1) dx = O(\delta^2)$
and $\int_0^1 v_1(x) dx = O(\delta)$, so $I_+$ and $I_-$ also each have measure close to 1/2. 

 Since $\lambda_1$ is negative, $g_1(x,y)$ is less than or equal to
-0.9 for all $y \in I_+$ and is 
greater than or equal to 0.9 for all 
$y \in I_-$. Since $e$ is close to 1/2,  
$e+ g_1(x_0,y)$ is close to or less than -1.4 when $y \in I_+$ (and in particular
is less than -1.3) and is close to or greater than 1.4 (and in particular is 
greater than 1.3) when $y \in I_-$. As a result, $g_1(x_0,y)$ has magnitude at least 
0.3, and sign opposite to that of $g_2(x_0,y)$, for all $y \in I_+\cup I
I_-$. This means that $g_1(x_0,y) g_2(x_0,y) < -0.27$ for all $y \in I_+ \cup I_-$.

When $y \not \in I_- \cup I_+$, $|g_2(x_0,y)| \le 0.9 < 1$, so $|g_2(x_0,y)|<2$, so 
$g_1(x_0,y)g_2(x_0,y) < 2$. 
Since $I_+ \cup I_-$ is a set of measure $m \approx 1$, 
\[ \int_0^1 g_1(x_0,y) g_2(x_0,y) dy \le -0.27 m + 2(1-m) < 0.\]
However, 
\[ \int_0^1 g_1(x_0,y) g_2(x_0,y) \, dy = \sum_{j=2}^\infty 
\lambda_1 \lambda_j v_1(x_0) v_j(x_0) \int_0^1 v_1(y) v_j(y) \, dy = 0, 
\]
by the orthogonality of the functions $v_j(y)$ in $L^2([0,1])$.
This is a contradiction, so $x_0$ does not exist. 

The same argument, with signs reversed, rules
out the possibility that $v_1(x)$ is ever less than
$-1/|\lambda_1|$. Since $|v_1(x)|$ is bounded by $|\lambda_1|^{-1}$,
$|g_1(x,y)| = |\lambda_1 v_1(x) v_1(y)|$ is also bounded by
$|\lambda_1|^{-1}$. 
Finally, we have that 
\[ -e - g_1(x,y) \le g_2(x,y) \le 1-e - g_1(x,y). \]
Since $e$ and $1-e$ are both less than 1, this implies that 
$|g_2(x,y)| < |g_1(x,y)|+1 \le |\lambda_1|^{-1} + 1$.
\end{proof}

Lemma \ref{bound-g1} is stated in terms of $\lambda_1$, which of course depends on the graphon $g$. 
However, $\lambda_1 = -\sigma + o(1)$, so for small $\delta$ we can replace our bounds 
involving $\lambda_1$ with uniform bounds in terms of $\sigma$, at the cost of replacing the constant 1 with a 
slightly smaller number. For instance, 
\[ |v_1(x)| < 1.1 \sigma^{-1}, \qquad |g_1(x,y)| < 1.1\sigma^{-1},
\qquad |g_2(x,y)| \le 1 + 1.1 \sigma^{-1} \]
whenever $\delta$ is sufficiently small. In practice, we do not need the specific bounds of Lemma \ref{bound-g1}. 
All we really need is for $v_1(x)$, $g_1(x,y)$ and $g_2(x,y)$ to be bounded.

We next turn to showing that $g_2(x,y)$ is not only bounded but small. 
Since $\|g_2\|_2^2$ is $O(\delta^2)$, the set of points where 
$|g_2(x,y)|$ is not small (say, smaller than a fixed $\epsilon$) has
measure $O(\delta^2)$. We now establish a similar result for vertical strips. 

\begin{lemma}\label{g-mostly-small} 
Let $g$ be an entropy-maximizing graphon. 
For any $\epsilon>0$ and any $x \in [0,1]$, the set 
of $y$-values for which $|g_2(x,y)| > \epsilon$ has measure $o(1)$. 
\end{lemma}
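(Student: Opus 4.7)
The plan is to invoke the Euler--Lagrange equations for the constrained maximum. At the optimizer there exist Lagrange multipliers $\alpha,\beta$ such that $H'(g(x,y))=\alpha+3\beta(g*g)(x,y)$ almost everywhere, where $(g*g)(x,y)=\int_0^1 g(x,z)g(z,y)\,dz$. Since $H'$ is smooth and strictly decreasing and the optimizer is bounded away from $0$ and $1$ (being $L^2$-close to the symmetric bipodal), this inverts to $g(x,y)=F((g*g)(x,y))$ for a smooth Lipschitz function $F=(H')^{-1}(\alpha+3\beta\,\cdot\,)$, which we may take to hold pointwise. Using the orthonormality of the eigenfunctions $v_j$ one then computes
\[
(g*g)(x,y) \;=\; e^2 + eD(x) + eD(y) + \lambda_1^2 v_1(x)v_1(y) + (g_2*g_2)(x,y),
\]
where $D(z)=\int_0^1(g(z,w)-e)\,dw=d_1(z)+d_2(z)$. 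For fixed $x$ this has the form $C_x+A_xv_1(y)+R_x(y)$ with residual $R_x(y)=ed_2(y)+(g_2*g_2)(x,y)$.

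I would first bound $\int R_x(y)^2\,dy=O(\delta^2)$ \emph{uniformly in $x$}: Corollary~\ref{cor:alpha_small} gives $\int d_2^2\,dy=O(\delta^2)$, while Lemma~\ref{bound-g1} together with Cauchy--Schwarz gives $\int(g_2*g_2)(x,y)^2\,dy\le\|g_2\|_\infty^2\,\|g_2\|_2^2=O(\delta^2)$. Chebyshev then shows that for any fixed $\eta>0$ the $y$-set where $|R_x(y)|>\eta$ has measure $O(\delta^2/\eta^2)=o(1)$. Discarding also the $o(1)$-measure set where $|v_1(y)^2-1|>\eta$ (small by Corollary~\ref{cor:alpha_small}), we are left with a $y$-set of measure $1-o(1)$ on which $v_1(y)\approx\pm 1$ and hence, by Lipschitz continuity of $F$, $g(x,y)$ lies within $o(1)$ of one of the two numbers $F(C_x\pm A_x)$.

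On this good set, $g_2(x,y)=g(x,y)-e-\lambda_1 v_1(x)v_1(y)$ agrees up to $o(1)$ with one of
\[
\psi_\pm \;:=\; F(C_x\pm A_x)-e\mp\lambda_1 v_1(x).
\]
The main obstacle is to show $\psi_\pm=o(1)$. Integrating $g_2(x,\cdot)$ against $1$ and against $v_1$ and using the good-set approximation gives $\psi_++\psi_-=2d_2(x)+o(1)$ (from $\int g_2(x,y)\,dy=d_2(x)$) and $\psi_+-\psi_-=o(1)$ (from $\int g_2(x,y)v_1(y)\,dy=0$ by orthogonality of $v_1$ with $v_j$, $j\ge 2$), so $\psi_\pm=d_2(x)+o(1)$. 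Substituting $C_x=e^2+ed_1(x)+ed_2(x)$ into the first identity produces a scalar self-consistency equation $d_2(x)=\Phi_x(d_2(x))+o(1)$ whose linearisation at $d_2=0$ has derivative $ef'(e^2)=-3\beta\,e(e(1-e)-\sigma^2)\neq 1$ (the right side is negative since $\beta>0$ and $\sigma^2<e(1-e)$). A standard implicit function theorem argument then yields $d_2(x)=O(\delta)=o(1)$ uniformly in $x$, whence $\psi_\pm=o(1)$, and the set $\{y:|g_2(x,y)|>\epsilon\}$ has measure $o(1)$.
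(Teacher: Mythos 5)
Your argument follows essentially the same route as the paper's proof: you invoke the Euler--Lagrange relation $H'(g)=\alpha+3\beta\,(g*g)$, split $(g*g)$ into the contribution of $e+g_1$ plus a residual $R_x$ that you bound uniformly in $L^2$ using Lemma~\ref{bound-g1} and Corollary~\ref{cor:alpha_small}, and then combine the orthogonality $\int_0^1 g_2(x,y)v_1(y)\,dy=0$ with $\int_0^1 g_2(x,y)\,dy=d_2(x)$ to reduce the problem to showing $d_2(x)=o(1)$, closed by a sign argument that ultimately rests on $H''<0$ together with $\Lambda_t>0$ (your $\beta>0$). This is a repackaging, via the inverse map $F=(H')^{-1}(\alpha+3\beta\,\cdot)$ and a fixed-point step, of the paper's direct move of adding the two Euler--Lagrange equations at $v_1(y)\approx\pm1$ and using $H'(1-u)=-H'(u)$ to obtain the sign contradiction; two small imprecisions are worth flagging, though neither is fatal. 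First, the linearization of your $\Phi_x$ at $0$ is $\tfrac{e}{2}\bigl(F'(e^2+A_x)+F'(e^2-A_x)\bigr)$ rather than $eF'(e^2)$; the displayed value $-3\beta e(e(1-e)-\sigma^2)$ is what one gets only when $v_1(x)\approx\pm1$ so that $A_x\approx\pm\sigma^2$, but since $F'<0$ everywhere (from $\beta>0$) you still get $\Phi_x'<0<1$ uniformly, which is all that is needed. Second, the ``implicit function theorem'' step also requires $\Phi_x(0)=o(1)$, i.e.\ $F(e^2+u)+F(e^2-u)\approx 2e$ uniformly for $|u|\le\sigma^2$; this is true and follows from the reflection symmetry $H'(1-w)=-H'(w)$ (which as $\delta\to 0$ forces $\alpha+3\beta e^2\to 0$), precisely the symmetry the paper exploits directly, but it should be stated rather than folded into the word ``standard.''
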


\begin{proof}
Let $G(x,y) = \int_0^1 g(x,z) g(z,y) \, dz$. As as operator, this is 
the square of $g$. Expanding that square using $g(x,y) = e + g_1(x,y) + g_2(x,y)$, we $G_1$ be the portion of $G$ that comes from 
$e+g_1$, and let $G_2$ be the additional contributions that involve $g_2$. 
\begin{eqnarray}
G_1(x,y) & = &  \int_0^1 (e + g_1(x,z))(e + g_1(y,z)) dz \cr 
& = & e^2 + \lambda_1^2 v_1(x)v_1(y) + e \left (
\int_0^1 v_1(z) dz \right ) (v_1(x) + v_1(y)). \cr 
& = & e^2 + \lambda_1 g_1(x,y) + O(\delta), \cr 
& = & \lambda_1(e+g_1(x,y)) + e^2-\lambda_1 e + O(\delta),
\end{eqnarray}
since $\int_0^1 v_1(z)\, dz = O(\delta)$ and $v_1(x)$ and 
$v_1(y)$ are bounded. 

We next turn to $G_2$. Since $\int_0^1 v_1(z) g_2(y,z) \, dz = 0$, there is no contribution from
the product of $g_1$ and $g_2$. We only have $eg_2$ and $g_2^2$ terms, specifically
\[ G_2(x,y) = e (d_2(x) + d_2(y)) + \int_0^1 g_2(x,z) g_2(y,z) dz. \]
The function $d_2(y)$ has small $L^2$-norm, and so must be $o(1)$ except
on a set of small measure. (Since $x$ is fixed, we cannot similarly argue
that $d_2(x)$ is small.) Finally,
since $g_2$ is bounded and has small $L^2$ norm, 
the integral $\int_0^1 g_2(x,z) g_2(y,z) dz$ for fixed $x$ is small except for a set of $y$'s that has small measure. 
The result is an estimate 
\[ G_2(x,y) = e d_2(x) + o(1) \]
that is true for $y$ in the complement of a set of measure $o(1)$, where that small set may depend on $x$. 

Combining this with our estimate of $G_1$, we have 
\begin{equation}
G(x,y) = (e^2 - \lambda_1 e) + \lambda_1 g_1(x,y) + e d_2(x) + o(1)
\end{equation}
for all but a small set of $y$'s. 

Since $g$ is assumed to maximize entropy subject to constraints on 
$\edens(g)$ and $\tdens(g)$, the functional derivative of $S(g)$ must 
be a linear combination of the functional derivatives of $\edens(g)$ 
and $\tdens(g)$. This yields the pointwise equations 
\begin{equation}\label{EL-general} H'(g(x,y)) = \Lambda_e + \Lambda_t G(x,y), \end{equation}
where $\Lambda_e$ and $\Lambda_t$ are Lagrange multipliers. 

For all but a small set of $y$'s, equation (\ref{EL-general}) takes the form 
\begin{equation} \label{EL-g2} H'(e+g_1(x,y) + g_2(x,y)) = \mu + \nu (e + g_1(x,y)) 
+ \rho d_2(x) + o(1), \end{equation}
where 
\[ \mu = \Lambda_e + \Lambda_t (e^2-\lambda_1 e), \qquad \nu = \lambda_1 \Lambda_t, \qquad \rho = e \Lambda_t. \]

For most values of $(x,y)$, $e+g_1(x,y)$ is close to $e \pm \sigma$ and 
$g_2(x,y)$ and $d_2(x)$ are small. 
This fixes $\Lambda_e$ and $\Lambda_t$, and therefore $\mu$, $\nu$, and $\rho$, 
to within a small error. 
Since $e = \frac12 + \delta$ and $H'(\frac12 - \sigma) = - H'(\frac12 + \sigma)$, we obtain
\begin{eqnarray}
H'\left(\frac12 + \sigma \right ) & = & \Lambda_e + \Lambda_t \left (\frac14-\sigma^2 \right ) + o(1) \cr \cr 
-H'\left (\frac12 + \sigma \right ) & = & \Lambda_e + \Lambda_t \left (\frac14 + \sigma^2 \right ) + o(1),  
\end{eqnarray}
 with solution 
 \begin{eqnarray}
 \Lambda_e \approx -\frac{1}{4\sigma^2} H'\left (\frac12 + \sigma \right ), & 
\displaystyle{\Lambda_t \approx \frac{1}{\sigma^2} H'\left (\frac12 + \sigma \right ),} & \cr \cr
 \mu \approx -\frac{1}{2\sigma} H'\left (\frac12 + \sigma \right ), & 
\displaystyle{\nu \approx \frac{1}{\sigma} H'\left (\frac12 + \sigma \right ),} & 
\rho \approx -\frac{1}{2\sigma^2} H'\left (\frac12 + \sigma \right ),
\end{eqnarray}
where all of the approximations are ``$+ o(1)$'' as $\delta \to 0$. 

From the explicit form of $H'(g) = \ln(1-g)-\ln(g)$, we see that there 
are only three roots to the equation $H'(g) = \mu + \nu g$,
which are located near $g=e\pm \sigma$ and $g=e$. Our immediate goal
is to show that 
$v_1(x)$ only takes values close to 0 and $\pm 1$, which implies that 
$g_1(x,y)$ is only close to $e$ and $e \pm \sigma$. 

Since $\int v_1(x) dx$ and $\int (v_1(x)^2-1)^2 dx$ are small, 
the function $v_1$ must be
close to 1 on an interval (call it $I_1$) of measure close to 1/2, 
must be close to $-1$ on an interval $I_2$ of measure close to $1/2$, may
be close to 0 on a third interval $I_3$ of small measure, and may take on
other values on a fourth interval $I_4$ of small measure. 

Let $x$ be an arbitrary point in $[0,1]$, and let $y_1$ and $y_2$
be generic points in $I_1$ and $I_2$. Equation (\ref{EL-g2}) then determines
$a=g_2(x,y_1)$ and $b=g_2(x,y_2)$ in terms of $d_2(x)$. What's more, $g_2$ takes values close
to $a$ on all of $\{x\} \times I_1$ (excepting those values of $y$ where
equation (\ref{EL-g2}) doesn't apply), and takes values close to $b$ on all of 
$\{x\} \times I_2$. We then compute
\[ \int_0^1 g_2(x,y) v_1(y) dy \approx (a-b)/2. \]
However, this integral must be zero, since $v_1(y)$ is orthogonal to all of 
the functions $v_i(y)$ that make up $g_2(x,y)$. We conclude that $a \approx b$.

If $b \approx a$, then $d_2(x) = \int_0^1 g_2(x,y) dy$ also equals $a$ and our
equations at $(x,y_2)$ and $(x,y_1)$ become 
\begin{eqnarray}\label{H-prime-fun}
H'\left(\frac12 + \sigma v_1(x) + a \right ) & \approx & \mu + \nu \left (\frac12 +\sigma v_1(x)\right ) + \rho a \cr 
H'\left (\frac12 - \sigma v_1(x) + a \right ) & \approx & \mu + \nu \left (\frac12 - \sigma v_1(x) \right ) + \rho a,
\end{eqnarray}
Adding these equations, and using the fact that $H'(\frac12 - \sigma v_1(x) + a) = - H'(\frac12 + \sigma v_1 -a)$, 
we get 
\begin{equation}\label{no-a-value} H'\left (\frac12 + \sigma v_1(x) + a \right ) - H'\left (\frac12 + \sigma v_1(x) - a \right ) \approx  
2 \mu + \nu + 2 \rho a \approx - \frac{a}{\sigma^2} H'\left (\frac12 + \sigma\right ). \end{equation}

By the mean value theorem, the left hand side of equation (\ref{no-a-value}) is $2a H''(u)$ for some $u$
between $\frac12 + \sigma v_1(x)-a$ and $\frac12 + \sigma v_1(x) + a$. Regardless of the value of $u$, this is 
a negative multiple of $a$. However, the right hand side is a positive multiple of $a$, since $H'(\frac12 + \sigma)<0$.
Since a negative multiple of $a$ equals a positive multiple, $a$ must be (approximately) zero. 

In particular, $g_2(x,y) \approx 0$ for all $y$ such that equation (\ref{EL-g2}) applies. That is, for fixed $x$, 
$g_2(x,y)$ is close to zero except on a set of $y$'s of small measure. 
\end{proof}

\begin{proof}[Proof of Proposition \ref{prop-g2-small}]
By Lemma \ref{bound-g1}, $g_2(x,y)$ is bounded. By Lemma
\ref{g-mostly-small}, for each $x$, $g_2(x,y)$  
is small for all but a small set of $y$'s. Combining these results,
we see that the degree function $d_2(x)$ is everywhere small, 
as is the integral $\int_0^1 g_2(x,z) g_2(y,z)\, dz$. 
That is, $G_2(x,y)$ is pointwise small. But then equation (\ref{EL-g2}) applies everywhere, so $g_2(x,y)$ is small everywhere. 
\end{proof}

\begin{proof}[Proof of Proposition \ref{prop-g1-small}]
In the notation of the proof of Lemma \ref{g-mostly-small}, we must show that the intervals $I_3$ and $I_4$ are 
empty, implying that $v_1(x)$ is everywhere close to $\pm 1$. 

Since $G_2(x,y)$ is small for all $(x,y)$, we must have 
\[ H'\left (\frac12 + \sigma v_1(x) \right ) = \mu
+ \nu \left (\frac12 + \sigma v_1(x) \right ) \] 
for all $y \in I_2$. However, the only solutions to this equation are (approximately) $\sigma v_1(x) = \pm \sigma$ or 0,
implying that $v_1(x) \approx \pm 1$ or 0. In other words, $x \in I_1 \cup I_2 \cup I_3$ and $I_4$ is empty. 

Showing that $I_3$ is empty requires a completely different argument, since equation (\ref{EL-general}) is indeed satisfied when $x \in I_3$.
However, equation (\ref{EL-general}) only defines stationary points with respect to pointwise small changes in 
$g(x,y)$. We also have to consider infinitesimal changes in the boundary between $I_1$, $I_2$ and $I_3$. 

So suppose that we increase the size of $I_1$ by an amount $\epsilon$ at the expense of $I_3$. That is, we change the 
value of $v_1(x)$ from near 0 to near 1 on a set of small measure $\epsilon$. To first order in $\epsilon$, the change in 
entropy is $2\epsilon (H(\frac12 + \sigma) - H(\frac12))$, since we are changing $g(x,y)$ from near zero to near 
$\frac12 \pm \sigma$ on a set of measure $2 \epsilon + O(\epsilon^2)$, and since $H(\frac12-\sigma)= H(\frac12 + \sigma)$. 
The change in edge density is $-2\epsilon \lambda_1 \int_0^1 v_1(y) dy = O(\delta \epsilon)$. 
To leading order, the change in the triangle density 
is $3\lambda_1^3 \epsilon$, since the $\lambda_1^3$ contribution to $\tdens(g)$ is actually
$\lambda_1^3 (\int_0^1 v_1(x)^2 dx)^3$, which changes from 
$\lambda_1^3$ to $\lambda_1^3 (1+\epsilon)^3 \approx \lambda_1^3 (1+3\epsilon)$. 

The variational equations $dS = \Lambda_e d\edens 
+ \Lambda_t d\tdens/3$ then become 
\begin{equation} \label{vary-v} 
2 \left [H \left (\frac12 + \sigma\right ) - H\left (\frac12 \right ) \right ] = \lambda_1^3 \Lambda_t = \sigma H'\left (\frac12 + \sigma \right ). \end{equation}
We expand both sides  of equation (\ref{vary-v}) as power series in 
$\sigma$. The left-hand side is 
\[ 2 \sum_{k=1}^\infty \frac{H^{(2k)}(1/2)}{(2k)!} \sigma^{2k}. \]
The right-hand side is 
\[ \sum_{k=1}^\infty \frac{H^{(2k)}(1/2)}{(2k-1)!} \sigma^{2k}. \]
The coefficients agree when $k=1$, but are strictly greater for the right-hand side when $k>1$. Since all terms are 
strictly negative (insofar as all even derivatives of $H$ are negative-definite), the right-hand side strictly smaller than the left-hand side.

Since varying the size of $I_3$ does not satisfy the variational equation (\ref{vary-v}), 
we cannot be in the interior of our parameter space. Rather, the measure of $I_3$ must be zero. 
\end{proof}

\section{Cost-benefit analysis}\label{sec:cost-benefit}

In Sections \ref{sec:apriori} and \ref{sec:pointwise} we showed that $g_2(x,y)$ is pointwise small, as is $v_1(x)^2-1$. 
In this section we show that they are 
zero, completing the proof of Theorem \ref{main1}. 
The key measures of how far they are from being zero are 
\begin{equation} \alpha^2 = \sum_{i=2}^\infty \lambda_i^2, \qquad 
\beta^2 = \int_0^1 (v_1(x)^2-1)^2 \, dx, \quad \hbox{ and } \quad \gamma = \int_0^1 v_1(x)\, dx.
\end{equation}
By Corollary \ref{cor:alpha_small}, $\alpha$, $\beta$ and $\gamma$ are all $O(\delta)$. 
The symmetric bipodal graphon is characterized by $\alpha=\beta=\gamma=0$. 

We use the expansion (\ref{S-expand}) and compare the moments $\mu_{2k}$ to those of the symmetric bipodal graphon.
We will estimate costs (terms that increase $\mu_{2k}$) and benefits (terms that decrease $\mu_{2k}$). 
We will show that having $\alpha$ or $\beta$ or $\gamma$
nonzero comes with costs
that go as $\alpha^2$, $\beta^2$, and $\gamma^2$, while the benefits 
are $o(\alpha^2+\beta^2+\gamma^2)$. When $\delta$ is sufficiently small, the costs exceed the benefits, so the 
symmetric bipodal graphon has more entropy than any graphon that isn't symmetric bipodal. 

We first establish the costs. Our triangle density is 
\[ e^3-\sigma^3 = t = e^3 + 3e \int_0^1 \left (d_1(x)^2 + d_2(x)^2\right ) \, dx + \lambda_1^3 + \sum_{j=2}^\infty \lambda_j^3. \] 
Now 
\[ \int_0^1 d_1^2(x) dx = \lambda_1^2 \left (\int_0^1 v_1(x) \, dx \right )^2 = \lambda_1^2 \gamma^2, \]
while $\int_0^1 d_2(x)^2 \, dx > 0$ and 
\[ \left |\sum_{j=2}^\infty \lambda_j^3 \right | \le \left ( \sum_{j=2}^\infty \lambda_j^2\right )^{3/2} = \alpha^3.\]
This implies that 
\[ \lambda_1^3 \le -\sigma^3 - 3 e \sigma^2 \gamma^2 + O(\alpha^3), \]
so
\[ \lambda_1^2 \ge \sigma^2 + 2e\sigma \gamma^2 + O((\alpha, \beta, \gamma)^3)  \]
and 
\[ \mu_2 = \lambda_1^2 + \alpha^2 \ge \sigma^2 + \alpha^2 + 2e \sigma \gamma^2 + o(\alpha^2+\beta^2+\gamma^2).\]
That is, there are $\alpha^2$ and $\gamma^2$ costs associated with $\mu_2$. 

We next look at $\mu_4$. This contains a term  
\[ \iint g_1(x,y)^4 \, dx \, dy = \lambda_1^4 \left ( \int_0^1 v_1(x)^4 dx \right )^2 = \lambda_1^4 (1+\beta^2)^2
\ge \sigma^4 (1 + 2\beta^2). \]
That is, there is a cost proportional to $\beta^2$. Having established costs proportional to $\alpha^2$, $\beta^2$, 
and $\gamma^2$, we just have to show that the benefits of having $\alpha, \beta, \gamma$ nonzero are smaller. 

We are looking at even moments 
\[ \mu_{2k} = \iint (\delta + g_1 + g_2)^{2k} \, dx \, dy. \]
We expand out the power, getting terms proportional to a power of $\delta$ times a power of $g_1$ times a power of $g_2$. 
We make repeated use of the following trick: 
\[ u^{2m} = 1+ u^{2m} - 1 = 1 + (u^2-1) p_m(u), \]
where
\[ p_m(u) = 1 + u^2 + u^4 + \cdots + u^{2m-1}. \]
This means that 
\begin{eqnarray}\label{k-even}  g_1(x,y)^{2m} & = &   \lambda_1^{2m} \left [ 1 + \left (v_1(x)^2-1 \right ) p_m(v_1(x))\right ]  \left [ 1 + \left (v_1(y)^2 -1 \right )p_m(v_1(y))\right ] \cr 
& = & \lambda_1^{2m} \Big [ 1 + \left (v_1(x)^2-1 \right ) p_m(v_1(x)) + \left (v_1(y)^2-1 \right ) p_m(v_1(y)) \cr 
&& \qquad + \left(v_1(x)^2-1 \right ) \left (v_1(y)^2-1 \right ) p_m(v_1(x)) p_m(v_1(y)) \Big ]
\end{eqnarray} 
and 
\begin{eqnarray}\label{k-odd}  g_1(x,y)^{2m+1} & = & \lambda_1^{2m+1} \Big [
v_1(y) \left (v_1(x) + 
\left (v_1(x)^3-v_1(x) \right ) p_m(v_1(x)) \right ) \cr 
&& \qquad + v_1(x) \left (v_1(y)^3-v_1(y) \right ) p_m(v_1(y)) \cr 
&& \qquad + \left (v_1(x)^2-1 \right ) \left (v_1(y)^2-1 \right ) v_1(x) v_1(y) p_m(v_1(x)) p_m(v_1(y)) \Big ].
\end{eqnarray} 

We divide the terms obtained by expanding $\iint (\delta + g_1 + g_2)^{2k}$ into several classes. 
\begin{enumerate}
\item Terms with three or more powers of $g_2$. Since $g_2$ is pointwise small and $g_1$ is bounded, these are bounded 
by small multiples of $\iint g_2^2 = \alpha^2$. In other words, they are $o(\alpha^2)$. 
\item Terms with two powers of $g_2$, an even number of powers of $\delta$ and an even number of powers of $g_1$. 
These are manifestly positive and represent costs, not benefits. Aside from the $\iint \delta^0g_1^0 g_2^2 = \alpha^2$ 
contribution to $\mu_2$ that we already considered, we do not keep track of these. 
\item Terms with two powers of $g_2$, an odd number of powers of $\delta$ and an odd number of powers of $g_1$. The integrand is a positive power of $\delta$ times a bounded quantity times $g_2(x,y)^2$, making the integral $O(\delta
\alpha^2)$.
\item Terms with one power of $g_2$, an odd power of $\delta$ and an even power of $g_1$.  We expand these using 
equation (\ref{k-even}) and consider each piece separately. First, we compute 
\[ \iint g_2(x,y) dx \, dy = - \iint g_1(x,y) dx \, dy = -\lambda_1 \left ( \int_0^1 v_1(x) dx \right )^2 
= - \lambda_1 \gamma^2.\]
Next, the $L^2$ norm of $v_1(x)^2-1$ is $\beta$ and the $L^2$ norm of $g_2$ is $\alpha$,
so 
\[ \iint (v_1(x)^2-1) p_m(v_1(x))  g_2 \, dx \, dy
= O(\alpha \beta). \]
The $(v_1(y)^2-1)$ piece is similar, while the $(v_1(x)^2-1)(v_1(y)^2-1)$ piece is $O(\alpha \beta^2)$. 
Thus $\iint g_1^{odd} \, g_2 \, dx \, dy = O(\alpha \beta)+ O(\gamma^2)$, so 
$\iint \delta^{odd}\, g_1^{odd}\, g_2 \, dx \, dy = O(\delta \alpha \beta) + O(\delta \gamma^2) 
= o(\alpha^2+\beta^2+\gamma^2)$. 
\item Terms with one power of $g_2$, an even power of $\delta$ and an odd power of $g_1$. We expand these using 
equation (\ref{k-odd}), noting that the first line in (\ref{k-odd}) has a single factor of $v_1(y)$ and the second line has a single factor of $v_1(x)$. However, 
\[ \int_0^1 v_1(y) g_2(x,y) \, dy= \int_0^1 v_1(x) g_2(x,y) \, dx = 0, \]
where $x$ is arbitrary in the first integral and $y$ is arbitrary in the second. Thus the first two lines contribute nothing and 
$ \iint g_1(x,y)^{2m+1} g_2(x,y) \, dx \, dy$ is equal to  
\[ \lambda_1^{2m+1} \iint \Big( v_1(x)^2-1 \Big) \Big (v_1(y)^2-1 \Big ) \Big ( v_1(x) v_1(y) p_m(v_1(x)) p_m(v_1(y) \Big ) g_2(x,y)\, dx \, dy.
\]
The factor $\Big( v_1(x)^2-1 \Big) \Big (v_1(y)^2-1 \Big )$ has $L^2$ norm $\beta^2$, the factor $g_2(x,y)$ has $L^2$ norm $\alpha$, and the factor 
$ v_1(x) v_1(y) p_m(v_1(x)) v_1(y) p_m(v_1(y))$ is bounded, so the 
integral is $O(\alpha\beta^2) = o(\alpha^2+\beta^2+\gamma^2)$. 
\item Terms with no powers of $g_2$, an even number of powers of $\delta$ and an even number of powers of $g_1$. 
These are all positive and are at least as big as the corresponding terms for the symmetric bipodal graphon. We have 
already taken into account the costs associated with $\mu_2$ and $\mu_4$. There are additional costs associated 
with higher moments, but they are not needed for this proof. 
\item Finally, there are terms with no powers of $g_2$, an odd number of powers of $\delta$ and an odd number of 
powers of $g_1$. Note that 
\[ \iint g_1(x,y)^{2m+1} \, dx \, dy = \lambda_1^{2m+1} \nu_{2m+1}^2,\]
where
\begin{eqnarray*}
\nu_{2m+1} & = & \int_0^1 v_1(x)^{2m+1} dx \cr 
& = & \int_0^1 v_1(x) \, dx+ \int_0^1 \left (v_1(x)^2-1 \right ) v_1(x) p_m(v_1(x))\, dx \cr 
& = & \gamma + O(\beta),
\end{eqnarray*}
since $v_1(x)^2-1$ has $L^2$-norm $\beta$ and $v_1(x) p_m(v_1(x))$ is bounded. 
Squaring $\nu_{2m+1}$ and
multiplying by an odd power of $\delta$ gives $O(\delta\gamma^2) + O(\delta \beta^2) + O(\delta \beta \gamma)$,
which is $o(\alpha^2+\beta^2+\gamma^2)$. 
\end{enumerate}

Putting everything together, we have identified costs proportional to $\alpha^2$, $\beta^2$ and $\gamma^2$. Other costs
only add to that total, so the total cost is at least a constant times $\alpha^2+\beta^2+\gamma^2$. All of 
the potential benefits are smaller, either involving three or more powers of $\alpha$, $\beta$ and $\gamma$, 
or $\delta$ times a quadratic function of $\alpha$, $\beta$ and $\gamma$, or the sup norm of $g_2$ times $\alpha^2$. 
When $\delta$ is sufficiently small, the costs outweigh the benefits, so the optimal graphon is symmetric bipodal. 

\section{The extent of the symmetric bipodal phase}\label{sec:belowER}

We have proven that the entropy-maximizing graphon is unique and symmetric 
bipodal on a region containing the interval $e=1/2$, $0 < t < 1/8$. It 
is natural to ask how far this symmetric bipodal phase extends. There
is considerable evidence that this phase contains much of the region 
$e \le 1/2$, $t<e^3$. 
\begin{itemize}
\item The unique entropy-maximizing graphon when $e<1/2$ and $t=0$ is known to be symmetric bipodal, with $\sigma = e$ \cite{RS1}. 
The entropy is $\frac12 H(2e)$. 
\item When $e<1/2$ and $t<e^3$, the symmetric bipodal graphon has entropy
strictly higher than any other graphon of the form 
$g(x,y) = e + \lambda_1 v_1(x) v_1(y)$. This is Proposition
\ref{rank-1}, proven below. 
\item When $e<1/2$ and $t$ sufficiently close to but below $e^3$, the symmetric bipodal graphon has 
entropy strictly higher than any other bipodal graphon. This is 
Proposition \ref{no-bipodal}, proven below.
\item Numerical investigations of the region $e<1/2$, $t<1/8$ 
\cite{RRS1} did not turn up {\em any} regions where the symmetric
bipodal graphon was not optimal. That is not a proof that such regions don't exist, of course, but
it does suggest that such regions are likely to be small. 
\end{itemize}

Despite this evidence, Theorem \ref{main3} says that there is 
a (possibly very small) open subset of the region $e<1/2$, $t<1/8$ on
which the symmetric bipodal graphon is {\em not} optimal. In this section
we state and prove Propositions \ref{rank-1} and \ref{no-bipodal} and 
then prove Theorem \ref{main3}.

\begin{proposition}\label{rank-1} 
Suppose that $e<1/2$ and that $g$ is a graphon of 
the form 
\[ g(x,y) = e + \lambda_1 v_1(x) v_1(y), \]
with edge density $e$ and triangle density $t=e^3 - \sigma^3 < e^3$.
Then $S(g)$ is bounded above by $\frac12 \left (H(e+\sigma) + H(e-\sigma)
\right)$, with equality if and only if $g$ is symmetric bipodal. 
\end{proposition}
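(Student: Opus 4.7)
The plan is to write both entropies as absolutely convergent power series in $\sigma$ and compare term by term. Normalizing $\int_0^1 v_1(x)^2\,dx = 1$, the edge constraint $\edens(g) = e$ forces $\int_0^1 v_1\,dx = 0$. Expanding $\tdens(g)$, every cross term carries a factor $\int v_1 = 0$ and vanishes, leaving $\tdens(g) = e^3 + \lambda_1^3$; comparing with $t = e^3 - \sigma^3$ gives $\lambda_1 = -\sigma$, so $g(x,y) = e - \sigma v_1(x)v_1(y)$.

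The next step is the pointwise bound $v_1(x)^2 \le e/\sigma$ a.e. Since $g \in [0,1]$ almost everywhere, choosing a sequence $x_n$ with $v_1(x_n)$ tending to $M := \operatorname{ess\,sup} v_1$ and applying $g(x_n,y) \ge 0$ for a.e.\ $y$ yields $v_1(y) \le e/(\sigma v_1(x_n))$, hence $M^2 \le e/\sigma$; a symmetric argument at the essential infimum closes the bound. Consequently $|\sigma v_1(x)v_1(y)| \le e$ a.e., which equals the radius of convergence of the Taylor series of $H$ about $e$ (since $e < 1/2 \le 1-e$). An elementary estimate gives $\sum_k e^k |H^{(k)}(e)|/k! < \infty$, so by dominated convergence
\[ S(g) = \sum_{k=0}^\infty \frac{(-\sigma)^k H^{(k)}(e)}{k!}\, m_k^2, \qquad m_k := \int_0^1 v_1(x)^k \, dx. \]
The symmetric bipodal graphon corresponds to $m_k = 0$ for odd $k$ and $m_k = 1$ for even $k$, giving $\tfrac{1}{2}(H(e+\sigma)+H(e-\sigma)) = \sum_{k\text{ even}} \sigma^k H^{(k)}(e)/k!$.

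Subtracting the two series, the $k\le 2$ contributions cancel because $m_0 = m_2 = 1$ and $m_1 = 0$. From the explicit formula $H^{(k)}(u) = (-1)^{k-1}(k-2)!\, u^{1-k} - (k-2)!\,(1-u)^{1-k}$ (valid for $k \ge 2$) one reads off $H^{(k)}(e) < 0$ for all even $k \ge 2$, and, crucially using $e < 1/2$, $H^{(k)}(e) > 0$ for all odd $k \ge 3$. Hence for odd $k \ge 3$ the contribution to $S(g)$ is $-\sigma^k m_k^2 H^{(k)}(e)/k! \le 0$. For even $k \ge 4$, Jensen applied to the convex function $t \mapsto t^{k/2}$ gives $m_k = \int (v_1^2)^{k/2} \ge \bigl(\int v_1^2\bigr)^{k/2} = 1$, so $\sigma^k (m_k^2 - 1) H^{(k)}(e)/k! \le 0$ as well. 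Summing yields $S(g) \le \tfrac{1}{2}(H(e+\sigma)+H(e-\sigma))$.

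For equality, every term above must vanish: $m_k = 0$ for all odd $k \ge 3$, and the equality case of Jensen forces $v_1^2$ to be a.e.\ constant, hence $v_1^2 = 1$ a.e. Combined with $\int v_1 = 0$, this forces the sets $\{v_1 = 1\}$ and $\{v_1 = -1\}$ to each have measure $1/2$, so $g$ is symmetric bipodal. The main technical hurdle is the first bound step, deducing the pointwise estimate $v_1^2 \le e/\sigma$ solely from the graphon constraint $g \in [0,1]$; without it the Taylor series need not converge in the relevant range. The rest of the proof is a termwise sign computation exploiting that the odd derivatives $H^{(k)}(e)$ have the favorable sign \emph{precisely} in the regime $e < 1/2$ of the proposition.
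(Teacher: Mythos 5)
Your proof is correct and takes essentially the same route as the paper's: expand $S(g)$ as the absolutely convergent series $\sum_j (-\sigma)^j H^{(j)}(e)\, m_j^2 / j!$ using the pointwise bound $\sigma v_1(x)^2\le e$ to control the radius of convergence, then use the sign pattern of $H^{(j)}(e)$ for $e<1/2$ together with $m_j\ge 1$ for even $j$ (via Jensen, which the paper invokes implicitly) and $m_j^2\ge 0$ for odd $j$ to conclude term-by-term that the symmetric bipodal graphon is the unique maximizer. You spell out a few steps the paper states tersely — the sequence argument for the essential-supremum bound, the explicit derivative formula for $H^{(k)}$, and the explicit appeal to Jensen with its equality case — but these are elaborations of the same argument rather than a different approach.
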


\begin{proof} First note that $\int_0^1 v_1(x) \, dx = 0$, since the 
overall edge density is exactly $e$. The triangle density is then
$e^3 + \lambda_1^3$, so $\lambda_1 = -\sigma$. Since $0 \le g(x,y)$, 
$v_1(x)$ is bounded in magnitude by $\sqrt{e/\sigma}$ and 
$\lambda_1 v_1(x) v_2(x)$ is bounded in magnitude by $e$. This implies 
that the power series 
\[ H(g(x,y)) = \sum_{j=0}^\infty \frac{H^{(j)}(e)}{j!} (\lambda_1
v_1(x) v_1(y))^j \]
converges absolutely. Integrating over $x$ and $y$ then gives 
\[ S(g) = \sum_{j=1}^{\infty} 
(-\sigma)^j \frac{H^{(j)}(e)}{j!} \left ( \int_0^1 v_1(x)^j\, dx \right )^2.
\]
Since $e<1/2$, all the odd derivatives of $H$ are positive at $e$, 
while all the even derivatives are negative. Multiplying by 
$(-\sigma)^j$, all of the terms with $j>0$ are negative. We maximize 
the entropy by minimizing $\int_0^1 v_1(x)^j \, dx$ for $j$ even
and by having $\int_0^1 v_1(x)^j \, dx=0$ for $j$ odd. The 
second moment $\int_0^1 v_1(x)^2 \, dx$ is always equal to 1. The 
fourth and higher even moments are minimized when (and only when!) 
$v_1(x)^2$ is constant and equal to 1. Since $\int_0^1 v_1(x) \, dx=0$, this means that 
$v_1(x)$ is $+1$ on a set of measure 1/2 and $-1$ on a set of measure 1/2,
which makes all of the odd integrals zero, as desired. In other words,
the symmetric bipodal graphon is the unique entropy maximizer among
graphons of this form. 
\end{proof}

Before turning to what happens just below the line $t=e^3$, we establish constraints on the form of any 
entropy-maximizing bipodal graphon with $t<e^3$. We use the parameters $(a,b,c,d)$
of Figure \ref{FIG:bipodal} to describe bipodal graphons. 
Without loss of generality we can assume that $a \le b$, since otherwise we could just swap $c$ and $1-c$ while swapping $a$ and $b$. 

\begin{proposition}\label{abcd}
Suppose that $t=e^3 - \sigma^3 < e^3$ and that a graphon $g$ maximizes entropy among all bipodal graphons with 
edge density $e$ and triangle density $t$. 
(We do not assume that $g$ maximizes entropy among all graphons, just that it is the best bipodal graphon.)
Then either $a=b$ and $c=1/2$ (a symmetric bipodal graphon) or $a<b<d$ and $c < 1/2$.
\end{proposition}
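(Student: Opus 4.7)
The strategy is via Lagrange multipliers on the four-parameter family of bipodal graphons subject to the constraints $\edens=e$ and $\tdens=t$. Differentiating $S = c^2H(a)+2c(1-c)H(d)+(1-c)^2H(b)$ and the analogous polynomial expressions for $\edens$ and $\tdens$ in $a$, $b$, $d$ (and dividing by the appropriate block weight) yields the three pointwise Lagrange equations
\[
H'(a) = \Lambda_e + 3\Lambda_t\,m_a,\qquad H'(b) = \Lambda_e + 3\Lambda_t\,m_b,\qquad H'(d) = \Lambda_e + 3\Lambda_t\,m_d,
\]
where $m_a = ca^2 + (1-c)d^2$, $m_b = cd^2 + (1-c)b^2$, and $m_d = d(ca + (1-c)b)$, together with a fourth equation coming from variation in $c$. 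The first observation is that $\Lambda_t > 0$: if $\Lambda_t = 0$, all three block values satisfy $H'(u)=\Lambda_e$, forcing $a=b=d$ and hence $t=e^3$, contradicting $t<e^3$; the sign then follows from the envelope theorem, since within the bipodal family the maximum entropy at fixed $e$ is attained at $t=e^3$ (by the constant graphon), so the bipodal entropy function is strictly increasing in $t$ on $(0,e^3)$.

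Because $H'$ is strictly decreasing and $\Lambda_t>0$, the ordering of $\{a,b,d\}$ is the reverse of the ordering of $\{m_a,m_b,m_d\}$. In the case $a=b$ the identity $m_a-m_b = (2c-1)(a^2-d^2)$ (which is forced to vanish) gives either $c=1/2$ or $a=d$; the latter, together with $a=b$, makes the graphon constant and gives $t=e^3$, a contradiction, so $c=1/2$. In the case $a<b$, I first rule out $a<d<b$ (together with the boundary cases $a=d<b$ and $a<d=b$): in these configurations both summands of
\[
m_a-m_b \;=\; c(a^2-d^2)-(1-c)(b^2-d^2)
\]
are non-positive, forcing $m_a\le m_b$ and therefore $a\ge b$, a contradiction. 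Once the ordering $a<b<d$ is established, the strict inequality $m_a>m_b$ rearranges to $(1-c)(d^2-b^2)>c(d^2-a^2)$; since $a<b<d$ gives $d^2-a^2>d^2-b^2>0$, we conclude $(1-c)/c>1$, i.e., $c<1/2$.

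The main obstacle is the last remaining configuration $d<a<b$, which is \emph{not} excluded by the pointwise Lagrange equations alone: they merely force $c>1/2$ in this case, which is internally consistent. My plan is to exclude it directly from the hypothesis $t<e^3$ by proving the unconditional inequality $t\ge e^3$ whenever $d\le \min(a,b)$ in a bipodal graphon, not just at maximizers. Using the centered decomposition
\[
t-e^3 \;=\; \tr\bigl((DN)^3\bigr) + 3e\bigl[c\mu_1^2 + (1-c)\mu_2^2\bigr],
\]
where $D$ is the diagonal matrix of community weights $(c,1-c)$, $N$ is the $2\times 2$ block matrix of the graphon minus $e$, and $\mu_1,\mu_2$ are the centered community-degree deviations, the problem reduces to an algebraic inequality in the two eigenvalues of $DN$. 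I expect this to be verifiable by elementary (though slightly involved) case analysis on the signs of the entries of $N$; alternatively, one could invoke the fourth Lagrange equation (the one from variation in $c$) together with the sign of $\Lambda_t$ to reach the same contradiction, but the direct algebraic route appears cleaner and isolates the step I anticipate as the principal technical difficulty.
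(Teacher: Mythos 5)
Your Lagrange-multiplier setup, the treatment of the $a=b$ case, the exclusion of $a \le d \le b$ via the sign of $m_a - m_b$, and the derivation of $c < 1/2$ from $a < b < d$ all track the paper's proof closely. The genuine gap is exactly where you flag it: the configuration $d < a < b$. You correctly observe that the pointwise Lagrange equations alone do not rule it out, and you propose to show unconditionally that $d \le \min(a,b)$ forces $t \ge e^3$, but the proposal stops at ``I expect this to be verifiable by \ldots case analysis'' without carrying it out. As written, that leaves the hardest step of the proposition unproved, and the route you sketch (the centered decomposition $t - e^3 = \tr((DN)^3) + 3e[c\mu_1^2 + (1-c)\mu_2^2]$ and a sign analysis on the entries of $N = M - e$) is more involved than necessary, since $\tr((DN)^3)$ does not have an obvious sign under $d < \min(a,b)$ alone.

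The paper closes the gap with a two-line argument using the \emph{uncentered} operator. Let $\lambda_1, \lambda_2$ be the two nonzero eigenvalues of the bipodal graphon $g$ itself (i.e.\ of $DM$ with $M = \begin{pmatrix} a & d \\ d & b\end{pmatrix}$, $D = \mathrm{diag}(c,1-c)$). Then $\lambda_1 + \lambda_2 = ca+(1-c)b$ and $\lambda_1^2+\lambda_2^2 = c^2a^2 + 2c(1-c)d^2 + (1-c)^2b^2$, so $\lambda_1\lambda_2 = c(1-c)(ab - d^2)$, which is strictly positive when $d < \min(a,b)$; hence both eigenvalues have the same sign, and since the Rayleigh quotient of $g$ at the constant function is $e$, the larger eigenvalue satisfies $\lambda_1 \ge e > 0$. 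Thus both are positive and $t = \lambda_1^3 + \lambda_2^3 > e^3$, contradicting $t < e^3$. You should replace your anticipated algebraic inequality with this observation. Separately, your argument for $\Lambda_t > 0$ (rule out $\Lambda_t = 0$ via $a=b=d$, then fix the sign by an envelope-theorem monotonicity claim) is somewhat informal --- the asserted monotonicity of the restricted maximum in $t$ on $(0,e^3)$ is not established --- but the paper itself simply asserts the sign, so this is a comparable, not a worse, level of rigor.
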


\begin{proof}
In this setting, the variational equations (\ref{EL-general}) become
\begin{eqnarray}
H'(a) & = & \Lambda_e + \Lambda_t (ca^2 + (1-c)d^2), \cr 
H'(b) & = & \Lambda_e + \Lambda_t (cd^2 + (1-c)b^2) ,\cr 
H'(d) & = & \Lambda_e + \Lambda_t (cad + (1-c)bd). 
\end{eqnarray}
Subtracting the second equation from the first gives 
\[ H'(a) - H'(b) = \Lambda_t (c(a^2-d^2) + (1-c)(d^2-b^2)). \]
If $a=b$, then the left hand side is zero and the right hand side is a nonzero multiple of $(1-2c)(a^2-d^2)$, implying that
either $c=1/2$ or $a=d$. But if $a=b=d$, then the triangle density is exactly $e^3$, which is a contradiction. Thus $a=b$ implies
that the graphon must by symmetric bipodal. 

We now turn to the possibility that $a<b$. 
The left hand side is then positive, since $H'$ is a decreasing function. Since $\Lambda_t = \frac13 \frac{\partial S}{\partial t}$
is positive, we must have 
\[ c(a^2-d^2) + (1-c)(d^2-b^2) > 0. \]
This either requires $d<a$, in which case $c>1/2$ (since $a^2-d^2 < b^2-d^2$) or $d>b$, in which case $c< 1/2$. 

Let $\lambda_1$ and $\lambda_2$ be the two nonzero eigenvalues of $g$. The trace of $g$ is $\lambda_1+\lambda_2 =ca+(1-c)b$, 
while the trace of $G$ is $\lambda_1^2 + \lambda_2^2 = c^2a^2 + (1-c)^2 b^2 + 2c(1-c)d^2$. From this we can compute 
\[ \lambda_1\lambda_2 = \frac12 ( (\lambda_1+\lambda_2)^2 - (\lambda_1^2+\lambda_2^2)) = c(1-c)(ab-d^2). \] 
If $d$ were less than $a$ and $b$, this would be positive,
meaning that both eigenvalues would be positive. Moreover, one of the two eigenvalues 
is at least $e$, so the triangle density, which is $\lambda_1^3+\lambda_2^3$, would be greater than $e^3$. This rules out the 
possibility that $d<a$, and we conclude that $a<b<d$ and $c<1/2$. 
\end{proof}

When $e > 1/2$ and $t$ is slightly less than $e^3$, the optimal graphon has been proven to take this form, with $a \approx 1-e$,
$b$ slightly less than $e$, and $d$ slightly greater than $e$, and with $c$ small. The situation is different when $e < \frac12$.

\begin{proposition}\label{no-bipodal} 
Suppose that $e<1/2$ and that $g$ is a bipodal graphon
with edge density $e$ and triangle density $t=e^3 - \sigma^3 < e^3$.
Then, for $\sigma$ sufficiently small, $S(g)$ is bounded above by $\frac12 \left (H(e+\sigma) + H(e-\sigma)
\right)$, with equality if and only if $g$ is symmetric bipodal. 
\end{proposition}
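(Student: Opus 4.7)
My plan is to combine a compactness argument with a local Taylor expansion of the entropy around the constant graphon $g\equiv e$. For each $\sigma>0$ the parameter space of bipodal graphons with edge density $e$ and triangle density $e^3-\sigma^3$ is compact, so $S$ attains a maximum at some $g^*_\sigma$. By Proposition \ref{abcd}, $g^*_\sigma$ is either symmetric bipodal (and we are done) or satisfies $a<b<d$ and $c<\tfrac12$; I will argue by contradiction that the second alternative cannot occur for $\sigma$ small. Assume it does along a sequence $\sigma_n \downarrow 0$. Since $S(g^*_{\sigma_n}) \ge \tfrac12(H(e+\sigma_n)+H(e-\sigma_n)) \to H(e)$ and, by strict concavity of $H$, the constant graphon $g\equiv e$ is the unique entropy maximizer at triangle density exactly $e^3$, the parameters $(a_n,b_n,d_n)$ must converge to $(e,e,e)$ along a subsequence (the degenerate cases $c_n\to 0$ or $c_n\to 1$ reduce to the same conclusion on the surviving pair of parameters).

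Writing $h = g^* - e$, $d_h(x) = \int h(x,y)\,dy$, and letting $\lambda_1,\lambda_2$ denote the nonzero eigenvalues of the rank-at-most-two operator $h$, the two identities I rely on are
\[
S(g^*) - H(e) = \tfrac12 H''(e)\,\|h\|_2^2 + \tfrac16 H'''(e)\iint h(x,y)^3\,dx\,dy + O(\|h\|_\infty \|h\|_2^2),
\]
\[
\lambda_1^3 + \lambda_2^3 = -\sigma^3 - 3e \int d_h(x)^2\,dx.
\]
The power-mean inequality $|\lambda_1^3+\lambda_2^3|\le(\lambda_1^2+\lambda_2^2)^{3/2}$ combined with the triangle-density identity yields $\|h\|_2^2 \ge \sigma^2$, with equality precisely when $\int d_h^2 = 0$ and one of the two eigenvalues vanishes.

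I then split the analysis into two regimes. In the \emph{generic} regime where $\|h\|_2^2 - \sigma^2$ is of order $\sigma^2$ or larger, the quadratic contribution $\tfrac12 H''(e)(\|h\|_2^2-\sigma^2)$ is a strictly negative $O(\sigma^2)$ quantity (since $H''(e)<0$) that dominates all higher-order corrections for $\sigma$ small, giving $S(g^*)<\tfrac12(H(e+\sigma)+H(e-\sigma))$. In the \emph{near-minimal} regime where $\|h\|_2^2 - \sigma^2$ is subleading, the bipodal graphon must lie close to the explicit one-parameter family
\[
a_c = e - \tfrac{1-c}{c}\sigma, \qquad b_c = e - \tfrac{c}{1-c}\sigma, \qquad d_c = e + \sigma, \qquad c \in \bigl(0,\tfrac12\bigr],
\]
consisting of rank-one perturbations with $\int d_h^2 = 0$, $\lambda_1 = -\sigma$, $\lambda_2 = 0$. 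A direct Taylor expansion of the entropy along this family gives
\[
S(g) - \tfrac12\bigl(H(e+\sigma) + H(e-\sigma)\bigr) = -\frac{H'''(e)\,\sigma^3\,(\alpha-1)^2}{6\alpha} + O(\sigma^4),\qquad \alpha = \tfrac{1-c}{c},
\]
and since $H'''(e)>0$ whenever $e<\tfrac12$, this is strictly negative for all $c\ne\tfrac12$, producing the desired contradiction.

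The main obstacle I expect is controlling the transitional regime where $\|h\|_2^2-\sigma^2$ is of an intermediate order: there the quadratic $H''$ cost and the cubic $H'''$ term are comparable and must be combined with the exact triangle-density identity to conclude uniformly in $\sigma$. My plan to handle this is to parametrize admissible bipodal perturbations simultaneously by their deviation from the explicit one-parameter family above and by the magnitude of $\int d_h^2$, and to expand $S - \tfrac12(H(e+\sigma)+H(e-\sigma))$ as a negative-definite quadratic form in these two parameters plus controlled higher-order remainders, using Proposition \ref{abcd} to restrict attention to the sector $a<b<d$, $c<\tfrac12$.
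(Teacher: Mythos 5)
Your proposal follows the same underlying strategy as the paper: pass to a parametrization in which the symmetric bipodal graphon is the obvious reference point, use the triangle identity $\tr(h^3) + 3e\int d_h^2 = -\sigma^3$ to get the lower bound $\|h\|_2^2 \ge \sigma^2$, and show that the moment penalties (cubic and higher) make any asymmetric bipodal configuration strictly worse for $e<1/2$. Your computation in the near-minimal regime is correct and agrees with the paper's leading $\mu_3$ coefficient: the entropy deficit $-\dfrac{H'''(e)\,\sigma^3(1-2c)^2}{6c(1-c)}+O(\sigma^4)$ is exactly the paper's $\dfrac{H'''(e)}{6}\Bigl(2c(1-c)-\tfrac{(1-c)^3}{c}-\tfrac{c^3}{1-c}\Bigr)\alpha^3$. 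The main structural difference is that the paper parametrizes asymmetric bipodal graphons directly by $(\alpha,\eta,c)$, where $\eta$ measures the failure of the degree function to be constant, uses the triangle constraint to force $\eta^2=O(\alpha^3)$, and then compares \emph{all} moments $\mu_k$ to those of the symmetric graphon, rather than truncating at $\mu_3$ and bounding the remainder.

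There is, however, a genuine gap in your compactness step that is not just a matter of detail. The assertion that ``the parameters $(a_n,b_n,d_n)$ converge to $(e,e,e)$'' does not follow from $S(g^*_{\sigma_n})\to H(e)$ together with uniqueness of the constant entropy maximizer at $t=e^3$. Uniqueness of the limiting graphon controls the $L^1$ (cut-norm) limit, not the block values: if $c_n\to 0$, the graphon converges to $e$ in $L^1$ even if $a_n$ stays bounded away from $e$ (and the constraints $a_n<b_n<d_n$, $c_n<\tfrac12$ from Proposition~\ref{abcd} do not forbid this). You acknowledge the degeneracy but dismiss it by saying ``reduce to the same conclusion on the surviving pair of parameters,'' which is precisely where the argument needs content. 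Your entire Taylor machinery — the expansion $S(g^*)-H(e)=\tfrac12 H''(e)\|h\|_2^2+\tfrac16 H'''(e)\iint h^3+O(\cdot)$ and the generic/near-minimal dichotomy — presumes $\|h\|_\infty\to 0$, and in the degenerate regime $\|h\|_\infty$ is bounded below. Without that, the cubic term and the quartic remainder (which, incidentally, should be $O(\|h\|_\infty^2\|h\|_2^2)$, not $O(\|h\|_\infty\|h\|_2^2)$) are of the same order as the quadratic cost, and the ``generic regime dominates'' claim collapses. To close the gap you would need either to show via the Euler–Lagrange equations that $c\to 0$ forces $a\to e$ (which in fact works, since $H'(a)=\Lambda_e+\Lambda_t(ca^2+(1-c)d^2)\to\Lambda_e+\Lambda_t e^2=H'(e)$), or to argue directly that for $e<\tfrac12$ the boundary configurations with $c\to 0$ cannot satisfy the triangle constraint $t<e^3$ with competitive entropy. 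Likewise, the ``transitional regime'' quadratic-form argument is only a plan, not an execution; the paper replaces it by an explicit bound $\eta=O(\sigma^3(1-2c))$ obtained from stationarity in $\eta$, which keeps the whole estimate uniform in $c$ as $c\to\tfrac12$.
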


\begin{proof} Let 
\[ \Delta a := a-e, \qquad \Delta b := b-e, \qquad \Delta d := d-e \]
and let
\begin{equation} \eta := \frac{c}{1-c} \Delta a + \Delta d, \qquad \alpha := 2 c \eta - \frac{c}{1-c} \Delta a. \end{equation}
The leading term is $\alpha$\footnote{This is not the same as the $\alpha$
in  the proof of Theorem \ref{main1}. There are only so many Greek letters
in the alphabet.}, while $\eta$ measures the extent to which the degree function fails to be constant. 
We can then express all of our quantities in terms of $\alpha$, $\eta$ and $c$.
\begin{eqnarray}\label{Delta-abd}
\Delta a & = & -\frac{1-c}{c} \alpha + e(1-c)\eta, \cr \cr 
\Delta b & = & -\frac{c}{1-c} \alpha -2c \eta, \cr \cr 
\Delta d & = & \alpha + (1-2c)\eta. 
\end{eqnarray}
In terms of these parameters, the triangle density works out to be
\begin{equation}\label{t-count}
e^3-\sigma^3 = \tdens(g) = e^3 + 3(e-\alpha) c(1-c) \eta^2 - \alpha^3. 
\end{equation}
Since the triangle density is less than $e^3$, $\eta^2$ must be 
$O(\alpha^3)$. That is, $\eta$ is much smaller than $\alpha$.

We now compute the entropy
\[S(g) = \sum_{k=0}^\infty \frac{H^{(k)}(e)}{k!} \mu_k, \]
where
\[ \mu_k = \iint (g(x,y)-e)^k \, dx \, dy
= c^2 (\Delta a)^k + (1-c)^2 (\Delta b)^k + 2c(1-c) (\Delta d)^k.\]
Since $e<1/2$, the odd derivatives of $H$ at $e$ are positive, while
the even derivatives are negative, so we 
want to minimize the even moments and maximize the odd moments. 
The symmetric bipodal graphon (uniquely) minimizes the even moments 
and has all the odd moments equal to zero. For an asymmetric bipodal graphon to do as well, it must have some positive odd moments. 

The moment $\mu_k$ is a $k$-th order homogeneous polynomial in
$\alpha$ and $\eta$ with coefficients that depend on $c$. Since 
$\eta =O(\alpha^{3/2})$, 
\[ \mu_k = \left ( 2c(1-c) + (-1)^k \frac{(1-c)^k}{c^{k-2}}
+ (-1)^k \frac{c^k}{(1-c)^{k-2}} \right ) \alpha^k + O(\alpha^{k+\frac12}). \]
The coefficient of $\alpha^k$ is zero when $k=1$, is 1 when $k=2$, is negative 
when $k$ is an odd number greater than 1, and is greater than 1 when $k$ is an even number greater than 2. In other words, all moments with $k>2$ are 
worse, to leading order, than the moments of the symmetric bipodal graphon. 

There is one more point we must account for. For $k$ odd,  
$\left ( 2c(1-c) - \frac{(1-c)^k}{c^{k-2}}
- \frac{c^k}{(1-c)^{k-2}} \right )$ goes to zero as $(1-2c)^2$
as $c \to 1/2$. We must rule out the possibility that other 
contributions to $\mu_k$ might become greater than the $\alpha^k$ 
term as $c$ approaches 1/2. 

This requires estimates on $\eta$. From the formula for the triangle
density, we have that 
\[ \alpha \approx \sigma + \frac{e c(1-c) \eta^2}{\sigma^2},\]
and hence that 
\[ \mu_2 = \alpha^2 + 2c(1-c)\eta^2 \approx \sigma^2 + \frac{2ec(1-c)}{\sigma} \eta^2. \]
That is, there is a cost proportional to $\eta^2/\sigma$ that does 
not vanish as $c \to 1/2$. Meanwhile, all contributions to 
moments involving odd powers of $\eta$ are proportional to $(1-2c)$. 
This is because the graphon is invariant under the transformation
$\eta \to -\eta$, $c \to 1-c$, $a \leftrightarrow b$. The leading
such contribution comes from $\mu_3$ and goes as $(1-2c)\alpha^2\eta
\approx (1-2c) \sigma^2 \eta$ times a polynomial in $c$ that does not 
vanish at $c=1/2$. Setting the derivative of the entropy with respect to
$\eta$ equal to zero tells us that $\eta = O(\sigma^3(1-2c))$. All contributions from odd powers 
of $\eta$ are thus $O(\sigma^5(1-2c)^2)$, and so are dominated by the $\alpha^3$ contribution to $\mu_3$, 
while all contributions from even powers of $\eta$ 
are dominated by the $\mu^2/\sigma$ contribution to $\mu_2$. 

\end{proof}

Thanks to Propositions \ref{rank-1} and \ref{no-bipodal}, any graphon that does better than symmetric bipodal in the region just below
$t=e^3$ with $e<1/2$ must be at least tripodal (or perhaps not even multipodal at all) and the difference between that graphon and a 
constant graphon must have rank at least two. That is exactly what we construct in the proof of Theorem \ref{main3}.

\begin{proof}[Proof of Theorem \ref{main3}]
We consider values of $e$ and $t=e^3-\sigma^3$ where $e<e_0$ and $\sigma$ is sufficiently small. 
The number $e_0$ is defined by the equation
\begin{equation} 3 H'''(e_0)^2 = H''(e_0)H''''(e_0),
\end{equation}
which simplifies to $6e_0^2 - 6e_0 + 1=0$, or $e_0 = (3-\sqrt{3})/6 \approx 0.2113$.
When $e< e_0$, $3H'''(e)^2$ is 
greater than $H''(e)H''''(e)$. In fact, as $e \to 0$, $3 H'''(e)^2$
goes as $3 e^{-4}$, while $H''(e)H''''(e)$ goes as $2 e^{-4}$. However,
as $e$ approaches $1/2$, $3 H'''(e)^2$ goes to zero while $H''(e)H''''(e)$
does not. 

Let $A>B>0$ and let 
\begin{equation}
F(A,B) = \frac{H(e+A+B)+ H(e-A+B) -2H(e) -2BH'(e)}{(A^3-B^3)^{2/3}}.
\end{equation}
We will eventually choose $A$ and $B$ to maximize $F(A,B)$. Pick a 
small number $c$ and divide the interval $[0,1]$ into three pieces:
\[ I_1 = [0, c/2], \qquad I_2 = (c/2,c], \qquad I_3 = (c,1]. \]

Consider the graphon
\begin{equation}
g(x,y) = \begin{cases} e-A+B(1-c) & (x,y) \in I_1 \times I_1 
\cup I_2 \times I_2 \cr 
e+A+B(1-c) & (x,y) \in I_1 \times I_2 \cup I_2 \times I_1 \cr 
e - c B & (x,y) \in [(I_1\cup I_2)\times I_3] \cup [I_3 
\times (I_1 \cup I_2)] \cr 
e + \frac{c^2}{1-c} B & (x,y) \in I_3 \times I_3.
\end{cases}
\end{equation}
Equivalently, $g(x,y) = e - cA v_1(x) v_1(y) + cB v_2(x) v_2(y)$, 
where 
\[ v_1(x) = \begin{cases} c^{-1/2} & x \in I_1, \cr 
- c^{-1/2} & x \in I_2, \cr 
0 & x \in I_3, \end{cases} \qquad 
v_2(x) = \begin{cases} \sqrt{(1-c)/c} & x \in I_1 \cup I_2, \cr 
- \sqrt{c/(1-c)} & x \in I_3. \end{cases}\]
This graphon has edge density $\edens(g)=e$ and triangle density 
\[ \tdens(g) = e^3 - c^3 A^3 + c^3 B^3.\]
Setting the triangle density equal to $e^3-\sigma^3$ gives 
\[ c = \sigma (A^3 - B^3)^{-1/3}. \]

We now estimate the entropy
\begin{eqnarray} 
S(g) &=& \frac{c^2}{2} H\big (e-A+(1-c)B \big ) + \frac{c^2}{2} 
H \big (e+A+(1-c)B \big ) \cr 
&& + 2c(1-c) 
H\big (e- cB\big ) + (1-c)^2 H \left ( e + \frac{c^2}{1-c}B\right )
\end{eqnarray}
to order $c^2$, or equivalently to order $\sigma^2$. 
The first two terms already are $O(c^2)$, so we can simply replace 
$e \pm A + (1-c)B$ with $e \pm A + B$.
For the remaining terms, we can use a linear approximation for $H(u)$. 
The result is
\begin{eqnarray}
S(g) & = & H(e) + \frac{c^2}{2} \left [ H(e-A+B) + H(e+A+B) - 2H(e) -2BH'(e) \right ]
+ O(c^3) \cr \cr 
& = & H(e) + \frac12 F(A,B) \sigma^2 + O(\sigma^3). 
\end{eqnarray}
For comparison, the symmetric bipodal graphon has entropy 
\[ H(e) + \frac12 H''(e) \sigma^2 + O(\sigma^4). \]
If $F(A,B) > H''(e)$, and if $\sigma$ is sufficiently small, then the 
tripodal graphon has more entropy than the symmetric bipodal graphon.

What remains is showing that we can get $F(A,B)> H''(e)$ when 
$e < e_0$. Let $A$ be a small positive number and let 
\[ B = - \frac{H'''(e)}{2 H''(e)} A^2. \]
Since $H'''(e)>0>H''(e)$, $B$ is positive. Since $A^3-B^3 = A^3 + O(A^6)$,
$(A^3 - B^3)^{2/3} = A^2 + O(A^5)$. We compute the numerator of 
$F(A,B)$ to order $A^4$ by doing a 4-th order Taylor series expansion
of $H(e-A+B)$ and $H(e+A+B)$ around $e$ and keeping terms proportional to 
$A^2$, $A^4$, $B$, $B^2$, and $A^2B$. (The expression is even in $A$,
so we only get even powers of $A$.) The result is 
\begin{eqnarray} F(A,B) & = & \frac {(A^2+B^2) H''(e) + A^2B H'''(e)    
+ \frac{1}{12} A^4 H''''(e) + O(A^6)}{(A^3 - B^3)^{2/3}} \cr 
& = & \frac{A^2 H''(e) + A^4 \left ( \frac{H''''(e)}{12} - 
\frac{(H'''(e))^2}{4 H''(e)} \right ) + O(A^6)}{A^2 + O(A^5)} \cr 
& = & H''(e) + \left ( \frac{H''''(e)}{12} - 
\frac{(H'''(e))^2}{4 H''(e)} \right ) A^2 + O(A^3).
\end{eqnarray}
Since $e < e_0$, 
the coefficient of $A^2$ is positive, so $F(A,B)>H''(e)$ when $A$ is small. 
\end{proof}

\section{Symmetry as an order parameter}\label{sec:ordparam}
%\vskip.05truein

{\bf Question}: Considering  Figure \ref{FIG:Razborov} and the two open subsets
$\calO$ and $\calO_1$ there defined by Figure \ref{FIG:openset} and
Figure  \ref{FIG:transition}, could $\calO$ and $\calO_1$ 
be part of the same phase?
\medskip

Although there is no barrier between $\calO$ and $\calO_1$ like the
curve
$t=e^3$ between $\calO_2$ and $\calO_1$, the answer must
be ``no'', thanks to the following symmetry argument. On $\calO$, the
entropy-maximizing graphon has constant degree function
$d(x) = \int_0^1 g(x,y) \, dy = e$.  The density $T_2$ of 2-stars is
given by the integral $\int_0^1 d(x)^2 \, dx$, so $Q = T_2 - e^2$ is
identically zero on $\calO$. If  $\calO$ and $\calO_1$ were part of
the same phase and
 we had any analytic curve $c(s)$
running between them, then $Q$ would have to be zero on
the first part of the curve and then by analyticity it would have to be
zero on the entire curve. However, it is easy to check that $Q$ is not
zero on all of $\calO_1$, insofar as the degree function for the graphon of
Theorem \ref{thm:b11} is not constant. Instead, $Q$ is a nonzero
multiple of $\sigma^5$ plus $O(\sigma^6)$. This contradiction proves
our assertion. \qed

\bigskip

This argument has a very similar flavor to an argument that is common
in statistical physics. There, if you can find an ``order parameter''
\cite{A, Ru}, a physical quantity which is identically zero on an open
subset of the parameter space and nonzero in another, then it cannot
be analytic along any path from the first region to the
second, so the open subsets cannot be parts of the same phase.
Finding such an order parameter can be very difficult, but once found it can be
very useful as we now show.

To appreciate the subtleties associated with some phase transitions in
real materials, consider water in various common states. First
consider {\em gaseous} water (steam) at temperature $T_1$ just above
$100^\circ$ Celsius and atmospheric pressure $P$, and {\em liquid}
water at temperature $T_2$ just below $100^\circ$ Celsius and again
pressure $P$.  The mass density is much lower in that gaseous state
than in that liquid state, so in any reasonable sense there is a sharp
transition in state corresponding to the (arbitrarily) small change in
temperature. Now consider the pair of states: {\em liquid} water at
temperature $T_3$ just above $0^\circ$ Celsius and atmospheric
pressure $P$ and {\em solid} water (ice) at temperature $T_4$ just
below $0^\circ$ Celsius and again pressure $P$. Again the mass density
is different between these two states (ice floats on water) so again
in any reasonable sense there is a sharp transition in state corresponding
to the (arbitrarily) small change in temperature.

There is a big
difference between these two transitions. Consider a
different way of changing the state from that gaseous state to its
`neighboring' liquid state. It is expermentally possible, by slowly accessing
high temperatures and high pressures to use a different path between these two states
{\em without making any sharp change in state!} (One says there is a
`critical' point in the gas/liquid transition.) But again
experimentally, there is NO critical point in the liquid/solid
transition; however you vary temperature and pressure to move slowly between
a state of liquid water and a state of solid ice you must go through a sharp transition.
Percy Bridgman received the Nobel prize in 1946 for his
extensive experiments on
high pressure, one result of which was to demonstrate that there is no
critical point on the transition between fluid and solid in {\em any} known
material. It is an old problem to try to understand why this should be
the case. Consider the following quote
in \cite[page 11]{Uh}

\medskip
\begin{itemize}
\item[] The most outstanding unsolved problem of equilibrium statistical
mechanics is the problem of the phase transitions. Why do all
substances occur in at least three phases, the solid, liquid, and
vapor phase which can coexist in the triple point? Why is there, again
for all substances, a critical point for the vapor-liquid equilibrium,
while apparently there is no critical point for the fluid-solid
transition. Note that since these are {\em general} phenomena, they
must have a {\em general} explanation; the precise details of the
molecular structure and of the intermolecular forces should not
matter. 
\end{itemize}

\smallskip

\noindent
As described in \cite[page 19]{A}, Lev Landau tried to use symmetry as 
an order parameter to solve this problem:

\medskip
\begin{itemize}
\item[] It was Landau (1958) who, long ago, first pointed out the vital
importance of symmetry in phase transitions. This, the First Theorem
of solid-state physics, can be stated very simply: it is impossible to
change symmetry gradually. A given symmetry element is either there or
it is not; there is no way for it to grow imperceptibly.  This means,
for instance, that there can be no critical point for the melting
curve as there is for the boiling point: it will never be possible to
go continuously through some high-pressure phase from liquid to
solid.
\end{itemize}

While appealing, Landau's argument is not universally accepted; see
\cite[page 122]{Pi}. Part
of the problem
is that there is no known  model in
equilibrium statistical mechanics which can be {\it proven} to exhibit
both fluid and solid phases \cite{Br,Uh}. Even if such phases were proven to 
exist, it isn't at all clear how to define an appropriate order parameter.

Yet that is exactly what we have done in the context of random graphs 
at the beginning of this section: we
contrasted the subset $\calO$,  part of a ``symmetric'' phase in which
the order parameter $Q$ vanishes identically, with part of a phase in which
$Q$ is not zero.

Of course this is not a solution of the classic problem of 
proving a solid-fluid phase transition in a reasonable statistical
mechanics model; we are working with
random graphs, not with configurations of atoms. But that's actually
the point!  Graph models are a wonderful laboratory for developing,
with full mathematical rigor, techniques that are simply too hard in
statistical physics, a laboratory where important structural questions
can be successfully solved.

\section{Summary}\label{sec:Summ}
This paper is part of a series \cite{RS1,RS2,RRS1,RRS2,RRS3,KRRS1,KRRS2,KRRS3,KKRW,
  KRRS3,RRS3,NRS1,NRS2} studying combinatorial
systems (graphs, permutations, sphere packing) under competing
constraints, as an extension of extremal combinatorics but
concentrating on {\it nonextreme} states of the systems, that is,
states under nonextreme constraints.  We study asymptotically large
systems and for graphs and permutations we use a large deviation
principle (LDP) to analyze ``typical'' ({\em i.e.}\ exponentially most)
constrained states. (We do not know an LDP for sphere packing but
analyze such systems using the hard sphere model \cite{Low} in equilibrium
statistical mechanics.)  In this paper we sharpened our notion of phase by the use of analyticity; see Section \ref{sec:ordparam}. 

Our goal in studying typical nonextremely-constrained states in these
combinatorial systems is to analyze {\em emergent smoothness} response
to infinitesimal change in the constraints, and of the combinatorial
systems we have considered we have found dense graphs the most
amenable to development.

By design our graph modelling has many features in common with that of
the equilibrium statistical mechanics of particles with short range
forces, but has a significant difference: there is no ``distance''
between edges, so each edge has the same influence on any other edge.
In statistical mechanics, models with this feature of the influence of
particles on one another are called ``mean-field'', and although not
part of the mathematical formalism \cite{Ru}, mean-field models such as Curie-Weiss and
van der Waals \cite{Thom} are used to
study phase transitions where more physical models prove too
difficult. The random graph model we have been discussing has, in this
sense, more in common with mean-field models of statistical mechanics,
and, as seen by our success in proving the existence of phases and
phase transitions, may be able to provide a mathematical formalism for studying the
asymptotics of graphs and other combinatorial objects which 
will  be as fruitful mathematically as statistical mechanics has been.

We conclude with the following open problems in this edge/triangle model.

\begin{enumerate}
\item What is the actual structure of the optimal graphon when $e<e_0$ and $t$ is slightly below
$e^3$? Does it resemble the example given in Section \ref{sec:belowER}, or is its structure still wilder?  
How does the behavior of this graphon as $\sigma \to 0$ compare to the moderate
deviations results for $G(n,m)$ in \cite{NRS3}?

\item Is there a succession of phases as $e \to 0$ and $t$ remains close to $e^3$, with tripodal graphons giving way to 4-podal, 5-podal, and so on? 

\item When $e_0 < e < 1/2$ and $t$ is slightly less than $e^3$, is the optimal graphon symmetric bipodal,
or is it something else? 

\item When $e<1/2$ and $t=0$, the optimal graphon is symmetric bipodal. What if $e<1/2$ and $t$ is slightly positive? 

\item Proposition \ref{no-bipodal} is stated for $t$ close to $e^3$. However, the only step in the proof that uses $t
\approx e^3$ is the estimate that $\eta$ is much smaller than $\alpha$. Can the result be extended to the entire region
$e<1/2$, $t<e^3$? 

\item In \cite{KRRS2} it is proven that there are two open sets with
supersatured
triangles, $t > e^3$, which extend to phases. One of these is bounded below by the curve $t=e^3, e<1/2$, while the other
is bounded by $t=e^3, e>1/2$. Are these actually parts of the same phase, or are they distinct? 

\item In Section 4 of \cite{KRRS3}, numerical evidence is given of phase transitions in the 
edge/triangle model along curves where the entropy-optimizing graphon is not unique. 
It would be of interest to prove this. In principle, it may also be
possible to have regions of positive area on which the optimizing graphon is not unique, which
would be a challenge to interpret.
\end{enumerate}

\end{document}